\theoremstyle{plain}
\newtheorem{theo}{Theorem}[section]
\newtheorem{prop}[theo]{Proposition}
\newtheorem{lemm}[theo]{Lemma}
\theoremstyle{definition}
\newtheorem{rema}[theo]{Remark}
\DeclareMathOperator{\supp}{supp}
\def\Deltayx{\Delta_{x,y}}
\def\eps{\varepsilon}
\def\la{\left\lvert}
\def\lA{\left\lVert}
\def\le{\leq}
\def\mez{\frac{1}{2}}
\def\partialx{\nabla}
\def\partialyx{\nabla_{x,y}}
\def\ra{\right\rvert}
\def\rA{\right\rVert}
\def\xR{\mathbf{R}}
\begin{document}

\title{The water waves equations: from Zakharov to Euler}

\author{
T. Alazard, 
N. Burq, 
C. Zuily}
\date{\empty}

\maketitle

\abstract{Starting form the Zakharov/Craig-Sulem formulation of the gravity water waves equations, 
we prove that one can define a pressure term and hence obtain a solution 
of the classical Euler equations. 
It is proved that these results hold  in rough domains, under minimal assumptions 
on the regularity 
to ensure, in terms of Sobolev spaces, that the solutions are $C^1$.}

\section{Introduction}

We study the dynamics of an incompressible layer of inviscid liquid, 
having constant density, occupying 
a fluid domain with a free surface.


We begin by describing the fluid domain. Hereafter, $d\ge 1$, $t$ 
denotes the time variable and $x\in \xR^d$ and $y\in \xR$ denote the horizontal and vertical variables. 
We work in a fluid domain with free boundary  of the form
$$
\Omega=\{\,(t,x,y)\in (0,T)\times\xR^d\times\xR \, : \, (x,y) \in \Omega(t)\,\},
$$
where $\Omega(t)$ is the $d+1$-dimensional 
domain located between two hypersurfaces: 
a free surface denoted by $\Sigma(t)$ which 
will be supposed to be a graph and a fixed bottom $\Gamma$. 
For each time $t$, one has
$$
\Omega(t)=\left\{ (x,y)\in \mathcal{O} \, :\, y < \eta(t,x)\right\},
$$
where $\mathcal{O}$ is a given open connected domain and where $\eta$ is the free surface elevation. 
We denote by $\Sigma$ the free surface:
$$
\Sigma = \{(t,x,y): t\in(0,T), (x,y)\in \Sigma(t)\},  
$$
where $  \Sigma(t)=\{ (x,y)\in\xR^d\times \xR\,:\, y=\eta(t,x)\}$ 
and we set $\Gamma=\partial\Omega(t)\setminus \Sigma(t)$.

Notice that $\Gamma$ does not depend on time. 
Two classical examples are the case of infinite depth 
($\mathcal{O}=\xR^{d+1}$ so that $\Gamma=\emptyset$) 
and the case where the bottom is the graph of a function (this corresponds to the case 
$\mathcal{O}=\{(x,y)\in\xR^d\times \xR\,:\, y> b(x)\}$ for some given function $b$).

We introduce now a condition which ensures  that, at time $t$, there exists a fixed strip   separating  the free surface from the bottom.     
\begin{equation}\label{eta*}
(H_t):\qquad  \exists  h>0 : \quad \Gamma 
\subset \{(x,y)\in \xR^d \times \xR: y<\eta(t, x)-h\}.
\end{equation}
No regularity assumption will be made on the bottom $\Gamma$.

\subsubsection*{The incompressible Euler equation with free surface}


%
Hereafter, we use the following notations
$$
\partialx=(\partial_{x_i})_{1\le i\le d},\quad \partialyx =(\partialx,\partial_y), 
\quad \Delta = \sum_{1\le i\le d} \partial_{x_i}^2,\quad 
\Deltayx = \Delta+\partial_y^2.
$$
The Eulerian velocity field $v\colon \Omega \rightarrow \xR^{d+1}$ 
solves the incompressible Euler equation
\begin{equation*}
\partial_{t} v +v\cdot \nabla_{x,y} v + \nabla_{x,y} P 
= - g e_y ,\quad \text{div}\,_{x,y} v =0 \quad\text{in }\Omega,
\end{equation*}
where $g$ is the acceleration  due to gravity ($g>0$) and $P$ is the pressure. 
The problem is then given by three boundary conditions:
\begin{itemize}
\item a kinematic condition (which states that the free surface moves with the fluid) 
\begin{equation}
\partial_{t} \eta = \sqrt{1+|\partialx \eta|^2 } \, (v \cdot n)   \quad \text{ on } \Sigma,
\end{equation}
where $n$ is the unit  exterior normal  to $\Omega(t)$,
\item
a dynamic condition (that expresses a balance of forces across the free surface)  
\begin{equation}\label{syst}
P=0 \quad \text{ on } \Sigma, 
\end{equation}
\item  the "solid wall" boundary condition at the bottom $\Gamma$
\begin{equation}
v\cdot \nu=0, 
\end{equation}
\end{itemize}
where $\nu$ is the normal vector to $\Gamma$ 
whenever it exists. In the case of arbitrary bottom this condition will be implicit and contained in a variational formulation.

\subsubsection*{The Zakharov/Craig-Sulem formulation}

A popular form of the water-waves system 
is given by the Zakharov/Craig-Sulem 
formulation. This is an elegant 
formulation of the water-waves equations where 
all the unknowns are evaluated at the free surface only. 
Let us recall the derivation of this system. 

Assume, furthermore, that the motion of the liquid is irrotational. 
The velocity field $v$ 
is therefore given by $v=\nabla_{x,y} \Phi$ 
for some velocity potential $\Phi\colon \Omega\rightarrow \xR$ satisfying 
$$
\Delta_{x,y}\Phi=0\quad\text{in }\Omega,
\qquad \partial_\nu \Phi =0\quad\text{on }\Gamma,
$$
and the Bernoulli equation
\begin{equation}\label{Bern}
\partial_{t} \Phi +\mez \la \nabla_{x,y}\Phi\ra^2 + P +g y = 0 \quad\text{in }\Omega.
\end{equation}
Following Zakharov~\cite{Zakharov1968}, 
introduce the trace of the potential on the free surface:
$$
\psi(t,x)=\Phi(t,x,\eta(t,x)).
$$
Notice that since $\Phi$ is harmonic, $\eta$ and $\Psi$ fully determines $\Phi$. 
Craig and Sulem (see~\cite{CrSu}) observe that one can 
form a system of two evolution equations for $\eta$ and 
$\psi$. To do so, they introduce the Dirichlet-Neumann operator $G(\eta)$ 
that relates $\psi$ to the normal derivative 
$\partial_n\Phi$ of the potential by 
\begin{align*}
(G(\eta) \psi)  (t,x)&=\sqrt{1+|\partialx\eta|^2}\,
\partial _n \Phi\arrowvert_{y=\eta(t,x)}\\
&=(\partial_y \Phi)(t,x,\eta(t,x))-\nabla_x \eta (t,x)\cdot (\nabla_x \Phi)(t,x,\eta(t,x)). 
\end{align*}
(For the case with a rough bottom, we recall the precise construction later on). 
Directly from this definition, one has
\begin{equation}\label{eq:1}
\partial_t \eta=G(\eta)\psi.
\end{equation}
It is proved in~\cite{CrSu} (see also the computations in~\S\ref{S:pressure}) 
that the condition $P=0$ on the free surface implies that
\begin{equation}\label{eq:2}
\partial_{t}\psi+g \eta
+ \frac{1}{2}\la\partialx \psi\ra^2  -\frac{1}{2}
\frac{\bigl(\partialx  \eta\cdot\partialx \psi +G(\eta) \psi \bigr)^2}{1+|\partialx  \eta|^2}
= 0.
\end{equation}
The system~\eqref{eq:1}--\eqref{eq:2} is in Hamiltonian form 
(see~\cite{CrSu,Zakharov1968}), where the Hamiltonian is given by
$$
\mathcal{H}=\mez \int_{\xR^d} \psi G(\eta)\psi +g\eta^2\, dx.
$$

The problem to be considered here is that of the equivalence of the previous two formulations of the water-waves problem. 
Assume that the Zakharov/Craig-Sulem system has been solved. Namely, assume 
that, for some $r>1+ d/2$, 
$(\eta, \psi) \in C^0(I, H^{r}(\xR^d)\times  H^{r}(\xR^d))$ solves~\eqref{eq:1}-\eqref{eq:2}. 
We would like to show that we have indeed solved the initial system of Euler's equation with free boundary. In particular we have to define the pressure which does not appear in the above system~\eqref{eq:1}-\eqref{eq:2}. To do so we set 
$$ B=\frac{\partialx \eta \cdot\partialx \psi+ G(\eta)\psi}{1+|\partialx  \eta|^2},
\qquad
V= \partialx \psi -B \partialx\eta.$$
Then $B$ and $V$ belong to the space $C^0(I, H^{\mez}(\xR^d)).$
  It follows  from~\cite{ABZ1} that (for fixed $t$) one can define   unique variational solutions to the problems
$$
\Delta_{x,y}\Phi=0\quad\text{in }\Omega,\qquad 
\Phi\arrowvert_\Sigma =\Psi,\qquad \partial_\nu \Phi =0\quad\text{on }\Gamma.
$$
 $$
\Delta_{x,y} Q=0\quad\text{in }\Omega,\qquad 
 Q\arrowvert _\Sigma =g \eta + \mez(B^2 + \vert V \vert^2) ,\qquad \partial_\nu Q =0\quad\text{on }\Gamma.
$$

Then we shall define $P\in \mathcal{D}'(\Omega)$   by
$$
P:=  Q-gy-\mez \la \nabla_{x,y}\Phi\ra^2
$$
and we shall show firstly that $P$ has a trace on $\Sigma$ which is equal to $0$ and secondly  that $ Q =-\partial_t \Phi  $ which will show, according to \eqref{Bern} that we have indeed solved Bernouilli's (and therefore Euler's) equation. 

These assertions are not straightforward because we are working with  solutions of low regularity and we consider general bottoms (namely no regularity assumption is assumed on the bottom).  
Indeed, the analysis would have been much easier for $r>2+d/2$ and a flat bottom.

\bigbreak 
\noindent\textbf{Acknowledgements.} 
T.A. was supported by the French Agence Nationale de la Recherche, projects ANR-08-JCJC-0132-01 and ANR-08-JCJC-0124-01.

\section{Low regularity Cauchy theory}

Since we are interested in low regularity solutions, we begin by recalling the well-posedness results proved in~\cite{ABZ3}. These results clarify the Cauchy theory of 
the water waves equations as well in terms of regularity indexes for the initial conditions as for the smoothness of the bottom of the domain (namely no regularity assumption is assumed on the bottom).

Recall that the Zakharov/Craig-Sulem system reads
\begin{equation}\label{system}
\left\{
\begin{aligned}
&\partial_{t}\eta-G(\eta)\psi=0,\\[0.5ex]
&\partial_{t}\psi+g \eta
+ \frac{1}{2}\la\partialx \psi\ra^2  -\frac{1}{2}
\frac{\bigl(\partialx  \eta\cdot\partialx \psi +G(\eta) \psi \bigr)^2}{1+|\partialx  \eta|^2}
= 0.
\end{aligned}
\right.
\end{equation}

It is useful to introduce the vertical and horizontal components of the velocity,
\begin{gather*}
B:= (v_y)\arrowvert_{y=\eta} = (\partial_y \Phi)\arrowvert_{y=\eta},\quad 
V := (v_x)\arrowvert_{y=\eta}=(\nabla_x \Phi)\arrowvert_{y=\eta}.
\end{gather*}
These can be defined  in terms of $\eta$ and $\psi$ by means of the formulas 
  \begin{equation}\label{defi:BV}
B=\frac{\partialx \eta \cdot\partialx \psi+ G(\eta)\psi}{1+|\partialx  \eta|^2},
\qquad
V= \partialx \psi -B \partialx\eta.
\end{equation}
Also, recall that the Taylor coefficient $a=-\partial_y P\arrowvert_{\Sigma}$  
can be defined in terms of $\eta,V, B,\psi$ only (see~\S$4.3.1$ in~\cite{LannesLivre}).

In \cite{ABZ3} we  proved the following results about low regularity solutions. 
We refer to the introduction of~\cite{ABZ3,LannesJAMS} 
for references and a short historical survey of the background of this problem.
\begin{theo}[~\cite{ABZ3}]
\label{theo:Cauchy}
Let $d\ge 1$, $s>1+d/2$ and consider an initial data $(\eta_{0},\psi_{0})$ such that

$(i)$  
  $\eta_0\in H^{s+\mez}(\xR^d),\quad \psi_0\in H^{s+\mez}(\xR^d),\quad V_0\in H^{s}(\xR^d),\quad B_0\in H^{s}(\xR^d)$,
  
$(ii)$  the condition ($H_0$) in \eqref{eta*} holds initially for $t=0$,

$(iii)$  there exists a positive constant $c$ such that, for all $x$  in $\xR^d$, 
$a_0(x)\geq c$.

Then there exists $T>0$ such that 
the Cauchy problem for \eqref{system} 
with initial data  $(\eta_{0},\psi_{0})$ has a unique solution 
$$
(\eta,\psi)\in C^0\big([0,T], H^{s+\mez}(\xR^d)\times H^{s+\mez}(\xR^d)\big),
$$
such that 
\begin{enumerate}
\item   $(V,B)\in C^0\big([0,T], H^{s}(\xR^d)\times H^{s}(\xR^d)\big)$,
\item the condition ($H_t)$ in \eqref{eta*}   holds for $t\in [0,T]$ with $h$ replaced by 
$h/2$,
\item $a(t,x)\ge c/2,$    for all $(t,x)$ in $[0,T] \times \xR^d$. 
  \end{enumerate}
\end{theo}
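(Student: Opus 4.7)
The plan is to adapt the paradifferential/paralinearization strategy pioneered by Alazard--Métivier and developed in \cite{ABZ3}, with the main care going into handling a non-smooth bottom via a variational formulation and keeping the regularity index at the critical threshold $s>1+d/2$. The starting point is to give meaning to the Dirichlet--Neumann operator $G(\eta)$ for a rough bottom, using the variational construction recalled from \cite{ABZ1}: for any $\eta$ with $(H_t)$ holding, one solves the variational problem for $\Phi$ in a suitable weighted energy space and then defines $G(\eta)\psi$ by the formula displayed in the introduction. The key facts that must be propagated are the regularity $G(\eta)\psi\in H^{s-\mez}$ together with tame and Lipschitz estimates in $\eta$, precisely as in \S3 of \cite{ABZ3}.

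I would then paralinearize the system. Substituting the unknowns $(V,B)$ for the gradient of $\psi$ is essential: the identity $G(\eta)\psi=B-\partialx\eta\cdot V$, combined with the para-linearization of $G(\eta)\psi$ into $\lambda(\eta)\psi+R$ where $\lambda(\eta)$ is the Dirichlet--Neumann symbol and $R$ is a tame remainder, allows one to rewrite \eqref{system} as a paradifferential system on $(\eta,\psi)$. After introducing the good unknowns $U=\psi-T_B\eta$ and a Alinhac-style change of variable absorbing $T_V\partialx$, the system becomes, modulo controlled remainders, of the form
\begin{equation*}
\partial_t \begin{pmatrix}\eta\\ U\end{pmatrix}+T_V\partialx\begin{pmatrix}\eta\\ U\end{pmatrix}+\begin{pmatrix}0 & -T_{\lambda(\eta)}\\ T_a & 0\end{pmatrix}\begin{pmatrix}\eta\\ U\end{pmatrix}=\text{tame remainder},
\end{equation*}
where $a$ is the Taylor coefficient. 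The Rayleigh--Taylor condition $a\ge c/2$ is exactly what allows one to symmetrize this block by a paradifferential symmetrizer $S=\Op(\sqrt{a/\lambda(\eta)})$, obtaining a first-order para-hyperbolic system to which a Gårding-type energy estimate applies.

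From there the scheme is classical: (i) derive an a priori estimate for the norm $\|\eta\|_{H^{s+\mez}}+\|\psi\|_{H^{s+\mez}}+\|V\|_{H^s}+\|B\|_{H^s}$ on a time interval whose length depends only on the initial norm, the constants $h$ and $c$; (ii) build approximate solutions by regularizing the initial data (and, if desired, the bottom, via a monotone approximation of $\mathcal{O}$ whose limit is controlled at the variational level); (iii) pass to the limit using the uniform a priori bound plus compactness, and check by strong-weak continuity that $(H_t)$ and $a(t,x)\ge c/2$ persist on a short time interval by continuity of the corresponding functionals of $(\eta,V,B)$; (iv) obtain uniqueness by running the same energy estimate on the equation satisfied by the difference of two solutions, with a loss of one derivative that is compensated by the tame structure.

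The main obstacle, and what really forces the care in \cite{ABZ3}, is that the three difficulties---rough bottom, borderline regularity $s>1+d/2$, and Rayleigh--Taylor symmetrization---must be reconciled simultaneously. The rough bottom forbids any contour-deformation or pseudo-differential construction of $G(\eta)$ based on a smooth reference geometry, so every estimate on $G(\eta)$ must come from the variational formulation and from the tame elliptic estimates in the straightened strip $\{-h<z<0\}$; at the same time, the low regularity rules out naive paradifferential calculus, because the symbol $\lambda(\eta)$ lies only in a limited-regularity class. One therefore has to run the paradifferential calculus with symbols of limited smoothness and track at each step that remainders are controlled by $\|(\eta,\psi,V,B)\|_{H^{s+\mez}\times H^{s+\mez}\times H^s\times H^s}$ only, which is precisely what the introduction of $(V,B)$ as independent unknowns makes possible.
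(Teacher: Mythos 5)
The paper states Theorem~\ref{theo:Cauchy} without proof: it is imported as a black box from~\cite{ABZ3}, so the paper has no proof of its own to compare against. Your outline is nevertheless a faithful summary of the strategy in~\cite{ABZ3}: variational construction of $G(\eta)$ following~\cite{ABZ1} to handle an arbitrary bottom, paralinearization in the unknowns $(\eta,\psi,V,B)$ with Alinhac's good unknown $U=\psi-T_B\eta$, symmetrization of the resulting paradifferential system by a symbol built from the Taylor coefficient $a$ and the Dirichlet--Neumann symbol $\lambda$, and the standard a priori bound/regularization/uniqueness loop. One small imprecision: the transport term $T_V\cdot\nabla$ is not removed by a change of variable, but rather kept in the equation and controlled directly in the energy estimate via commutator estimates for paradifferential operators of limited smoothness.
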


\begin{theo}[~\cite{ABZ3}]\label{theo.strichartz}
Assume $\Gamma = \emptyset$. Let $d=2$, $s>1+\frac{d}{2}- \frac{1}{12}$ and consider an initial data 
 $(\eta_{0},\psi_{0})$ such that
\begin{equation*}
\eta_0\in H^{s+\mez}(\xR^d),\quad \psi_0\in H^{s+\mez}(\xR^d), \quad V_0\in H^{s}(\xR^d),
\quad B_0\in H^{s}(\xR^d).
\end{equation*}
Then there exists $T>0$ such that 
the Cauchy problem for \eqref{system} 
with initial data  $(\eta_{0},\psi_{0})$ has a  solution $(\eta, \psi)$ such that 
$$
(\eta,\psi,V,B)\in C^0\big([0,T];H^{s+\mez}(\xR^d)\times H^{s+\mez}(\xR^d) \times H^{s}(\xR^d)\times H^{s}(\xR^d)\big).
$$
\end{theo}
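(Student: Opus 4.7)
The plan is to combine the energy-based Cauchy theory of Theorem~\ref{theo:Cauchy} with semi-classical Strichartz estimates for the paralinearised system; the Strichartz gain allows to lower the energy threshold $s>1+d/2$ by $1/12$ in dimension $d=2$. Concretely, I would regularise the initial data into a sequence $(\eta_0^n,\psi_0^n)$ of smooth data converging to $(\eta_0,\psi_0)$ in the target norm. Because $\Gamma=\emptyset$ (infinite depth), Wu's sign condition on the Taylor coefficient is automatic, so Theorem~\ref{theo:Cauchy} provides smooth solutions on intervals $[0,T_n]$. The entire point is to show that $T_n$ is bounded below by a time $T>0$ depending only on the initial norm and that the corresponding norms of the $n$-th solution stay bounded uniformly; weak--$\ast$ compactness together with strong convergence in a slightly larger space then yields a solution at the required regularity.

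Next, I would paralinearise and symmetrise the Zakharov/Craig--Sulem system following \cite{ABZ3}, reducing $(\eta,\psi)$ to a single complex unknown $u$ satisfying
\[
\partial_t u + T_V\cdot\nabla u + i T_\gamma u = f,
\]
where $\gamma$ is a symbol of order $1/2$ with principal part $\sqrt{a(t,x)|\xi|}$ and $f$ is a remainder controlled by the energy norm. The model operator is thus a paradifferential half-wave equation. The plan is to establish, for this paradifferential evolution, a Strichartz estimate of the form
\[
\|u\|_{L^p([0,T];W^{\rho,\infty})}\leq C\bigl(\|u\|_{L^\infty_t H^{\s}}+\|f\|_{L^1_t H^{\s}}\bigr),
\]
with a loss of derivatives strictly smaller by $1/12$ than the Sobolev embedding loss.

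Proving this Strichartz bound is the main obstacle. The scheme is to localise $u$ dyadically at frequency $2^j$, work on the semi-classical time slab of natural length $2^{-j/2}$, freeze the coefficients of $\gamma$ at the corresponding paraproduct scale, and construct a WKB parametrix for the flow of the frozen symbol. Stationary phase on the parametrix delivers pointwise dispersive decay in the transverse variable; a $TT^{\ast}$ argument converts it into a Strichartz estimate on the slab; a square-function argument sums these over slabs and frequencies. The delicate point is that the coefficients $V,\gamma$ have precisely the regularity one is trying to recover, so the parametrix must be built at the paradifferential level and all remainders must be uniformly summable in $j$, which forces a careful semi-classical symbolic calculus with limited-regularity symbols.

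Finally, I would close a nonlinear bootstrap. The standard energy inequality from Theorem~\ref{theo:Cauchy} has the schematic form
\[
\mathcal{N}_s(t)\leq \mathcal{N}_s(0)+\int_0^t M(\tau)\,\mathcal{N}_s(\tau)\,d\tau,
\]
with $\mathcal{N}_s:=\|(\eta,\psi)\|_{H^{s+\mez}}+\|(V,B)\|_{H^s}$ and $M$ involving $\|\nabla V,\nabla B,\nabla\eta\|_{L^\infty_x}$, which by Sobolev embedding would require $s>1+d/2$. Replacing those $L^\infty_x$ bounds by their Strichartz version and invoking the estimate above, one controls the integrated $M$ in $L^p_t$ for $s>1+d/2-1/12$, absorbing the loss. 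This yields the uniform lower bound on $T_n$ and uniform bounds on $\mathcal{N}_s$ on $[0,T]$ needed to pass to the limit and obtain the solution in the stated class.
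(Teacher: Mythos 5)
This statement is not proved in the present paper: it is Theorem~\ref{theo.strichartz}, which the authors cite verbatim from~\cite{ABZ3} in order to record the low-regularity Cauchy theory before turning to the actual subject of the paper (constructing the pressure and recovering Euler from Zakharov). There is therefore no in-paper proof against which to compare your proposal; the burden here would be to reconstruct the argument of the cited reference.

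With that caveat, your sketch is a sound high-level account of the strategy that does underlie such a result. The structure you describe --- regularize the data, use infinite depth to invoke Wu's automatic Taylor sign, paralinearize and symmetrize~\eqref{system} into a scalar paradifferential half-wave equation $\partial_t u + T_V\cdot\nabla u + iT_\gamma u = f$ with $\gamma$ of order $1/2$, then prove a semiclassical Strichartz estimate by dyadic localization, time-slicing at the scale dictated by the coefficient regularity, a frozen-coefficient WKB parametrix, pointwise dispersive decay plus $TT^*$, summation over slabs and frequencies, and finally a bootstrap replacing the $L^\infty_x$ bounds in the energy inequality by their $L^p_t W^{\rho,\infty}_x$ Strichartz counterparts --- is the standard and correct roadmap, and matches the methodology of~\cite{ABZ3}. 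Two small remarks. First, you should be careful about the claim that the natural slab length is $2^{-j/2}$: for rough coefficients the slab length is constrained by the regularity of $V$ and $\gamma$ rather than by the semiclassical time scale alone, and it is precisely this interplay that fixes the $1/12$ loss in $d=2$; stating $2^{-j/2}$ without that qualification overstates the case. Second, you correctly flag that the main technical obstacle is building the parametrix at the paradifferential level with limited-regularity symbols and summing the remainders uniformly in $j$; this is indeed where all the work lies, and a complete proof would require the quantitative symbolic calculus and error estimates, none of which are supplied in the sketch.
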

\begin{rema}
$(i)$ For the sake of simplicity we   stated   Theorem \ref{theo.strichartz} in dimension $d=2$ 
(recall that $d$ is the dimension of the interface). One can prove such a result in any dimension $d \geq 2,$ the number 
$1/12$ being replaced by an index depending on $d$.  

$(ii)$ Notice that in infinite depth ($\Gamma = \emptyset$) the Taylor  condition 
(which is assumption $(iii)$ in Theorem~\ref{theo:Cauchy}) is always satisfied as proved by Wu~(\cite{WuInvent}). 
\end{rema}
Now having solved the system \eqref{system} in $(\eta, \psi)$ we have to show that we have indeed solved the initial system in $(\eta,v)$.  This is the purpose of the following section.

There is one point that should be emphasized concerning the regularity. 
Below we consider solutions $(\eta,\psi)$ of \eqref{system} such that
$$
(\eta,\psi)\in C^0\big([0,T];H^{s+\mez}(\xR^d)\times H^{s+\mez}(\xR^d)),
$$
with the only assumption that $s>\mez +\frac{d}{2}$ 
(and the assumption that there exists 
$h>0$ such that the condition ($H_t)$ in \eqref{eta*} 
holds for $t\in [0,T]$). Consequently, the result proved 
in this note apply to the settings considered in the above theorems. 

\section{From Zakharov to Euler}

\subsection{The variational theory}

In this paragraph the time is fixed so we will skip it and work in a fixed domain $\Omega$ whose top boundary $\Sigma$ is Lipschitz i.e $\eta \in W^{1,\infty}(\xR^d).$
  
We recall here the variational theory, developed in~\cite{ABZ1}, 
allowing us to solve the following problem in the  case of arbitrary bottom,
\begin{equation}\label{dirichlet}
\Delta \Phi  = 0 \quad \text{in } \Omega, \quad  \Phi\arrowvert_{\Sigma}  = \psi, \quad  \frac{\partial \Phi}{\partial\nu}\arrowvert_\Gamma  = 0.
\end{equation}
Notice that $\Omega$ is not necessarily bounded below.  We proceed as follows.
 
Denote by $\mathcal{D}$ the space of functions $u\in C^\infty(\Omega)$ such that $\nabla_{x,y} u\in L^2(\Omega) $ and let $\mathcal{D}_0 $ be the subspace of functions $u \in \mathcal{D}$ such that $u$ vanishes near the top boundary~$\Sigma.$

\begin{lemm}[see Prop 2.2 in~\cite{ABZ1}]
There exist a positive weight $g\in L^\infty_{loc}(\Omega)$ 
equal to $1$ 
near the top boundary $\Sigma$ of $\Omega$ and 
$C>0$ such that for all $u\in \mathcal{D}_0$
\begin{equation}\label{poincare}
\iint_\Omega g(x,y) \vert u(x,y)\vert^2 dx dy 
\leq C \iint_\Omega \vert \nabla_{x,y}u(x,y) \vert^2 dx dy.
\end{equation}
\end{lemm}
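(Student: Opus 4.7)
The plan is to combine a one-dimensional Poincaré estimate in a vertical slab just below the free surface with a chain-of-balls argument propagating the resulting $L^2$ bound into the rest of $\Omega$.

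First I would treat the slab
$$U_0:=\{(x,y)\in\xR^d\times\xR\,:\,\eta(x)-h<y<\eta(x)\},$$
which is contained in $\Omega$ by condition $(H_t)$. Any $u\in\mathcal{D}_0$ vanishes in a neighborhood of $\Sigma$, so the fundamental theorem of calculus in the vertical variable gives
$$u(x,y)=-\int_y^{\eta(x)}\partial_y u(x,s)\,ds\qquad\text{for }(x,y)\in U_0.$$
Cauchy-Schwarz followed by integration in $y\in(\eta(x)-h,\eta(x))$ and then in $x\in\xR^d$ yields
$$\iint_{U_0}|u|^2\,dx\,dy\le h^2\iint_\Omega|\nabla_{x,y}u|^2\,dx\,dy,$$
which is already the estimate \eqref{poincare} on $U_0$ with $g\equiv 1$.

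To extend the inequality to all of $\Omega$, I would exhaust $\Omega\setminus U_0$ by an increasing sequence of compact sets $K_n$. Since $\mathcal{O}$ is connected and $U_0$ lies immediately below $\Sigma$, every point of $\Omega$ lies in a connected component of $\Omega$ meeting $U_0$, so each $K_n$ can be covered by finitely many open balls $B_0,\ldots,B_{N(n)}$, all compactly contained in $\Omega$ and chained so that $B_0\subset U_0$ and $B_{i-1}\cap B_i\neq\emptyset$. A standard chain-of-balls argument, based on the Poincaré--Wirtinger inequality on each $B_i$ and comparison of successive averages, then gives
$$\int_{K_n}|u|^2\le C_n\Bigl(\int_{B_0}|u|^2+\iint_\Omega|\nabla_{x,y}u|^2\Bigr),$$
and absorbing the first term by the slab estimate produces $\int_{K_n}|u|^2\le C_n'\iint_\Omega|\nabla_{x,y}u|^2$. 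Setting
$$g:=\mathbf{1}_{U_0}+\sum_{n\ge 1}c_n\,\mathbf{1}_{K_n\setminus K_{n-1}}$$
with $c_n=2^{-n}/C_n'$ defines a strictly positive weight equal to $1$ near $\Sigma$ and belonging to $L^\infty_{\mathrm{loc}}(\Omega)$, since only finitely many indicators contribute on any compact subset. Summation yields \eqref{poincare} with $C=h^2+\sum_n c_nC_n'<\infty$.

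The main obstacle is the propagation step: because $\Gamma$ is allowed to be completely arbitrary, no uniform Poincaré inequality on $\Omega$ can hold and the constants $C_n'$ are generically bound to blow up as $K_n$ exhausts $\Omega$. The construction circumvents this precisely because $g$ is only required to be locally bounded, so one is free to let it decay at whatever rate the $C_n'$ dictate. The one genuine geometric input needed is that every point of $\Omega$ can be linked to $U_0$ by a finite chain of balls staying inside $\Omega$; this is ensured by the connectedness of $\mathcal{O}$ together with the strip condition $(H_t)$.
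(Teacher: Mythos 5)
This lemma is not proved in the paper — it is cited from Proposition 2.2 of \cite{ABZ1} — so I can only assess your argument on its own merits. Your proof is correct and isolates the two ingredients that make the statement work: the one-dimensional Poincar\'e estimate on the strip $U_0$ just below $\Sigma$, which uses only that $u\in\mathcal{D}_0$ vanishes near the top, and the freedom to let the weight $g$ decay as fast as needed away from $\Sigma$, which absorbs the otherwise uncontrollable Poincar\'e constants $C_n'$ coming from the rough geometry near $\Gamma$. Two steps are stated a bit too quickly. First, the assertion that every connected component of $\Omega$ meets $U_0$ does not follow from connectedness of $\mathcal{O}$ \emph{alone}; one also needs $(H_t)$, which forces $\Gamma\subset\partial\mathcal{O}$ to lie below $\{y=\eta-h\}$, so that a component $\Omega'$ disjoint from $U_0$ would satisfy $\partial\Omega'\subset\partial\mathcal{O}$, whence $\mathcal{O}=\Omega'\sqcup(\mathcal{O}\setminus\overline{\Omega'})$ and $\Omega'=\mathcal{O}$ by connectedness — a contradiction since $U_0\subset\Omega$. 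This hypothesis is in any case unavoidable: a component avoiding a neighborhood of $\Sigma$ would admit nonzero constants in $\mathcal{D}_0$, defeating \eqref{poincare} for every positive $g$. Second, since $K_n$ need not be connected you should not demand that the covering balls form a single linear chain; instead chain each ball of the finite cover back to a ball in $U_0$ (working component by component). With these clarifications the argument is complete and, as far as I can tell, in the same spirit as the proof in \cite{ABZ1}.
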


Using this lemma one can prove the following result.

\begin{prop}[see page 422 in~\cite{ABZ1}]\label{hilbert}
Denote by $H^{1,0}(\Omega)$ the space of functions 
$u$ on $\Omega$ such that there exists a sequence 
$(u_n) \subset \mathcal{D}_0$ such that
$$\nabla_{x,y}u_n \to \nabla_{x,y} u \quad \text{ in } 
L^2(\Omega), \quad u_n \to u \quad \text{ in } L^2(\Omega, g  dx dy),
$$
endowed with the scalar product
$$
(u , v)_{H^{1,0}(\Omega)}  =  (\nabla_x u ,\nabla_x v)_{L^2(\Omega)} 
+  (\partial_y u ,\partial_yv)_{L^2(\Omega)}.
$$
Then  $H^{1,0}(\Omega)$ is a Hilbert space and \eqref{poincare} holds for $u \in H^{1,0}(\Omega).$
\end{prop}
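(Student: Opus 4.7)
The plan is threefold: first extend the Poincaré inequality \eqref{poincare} from $\mathcal{D}_0$ to $H^{1,0}(\Omega)$; then use this to check that the proposed scalar product is positive definite; finally establish completeness by a diagonal extraction.

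For the extension, fix $u\in H^{1,0}(\Omega)$ and a defining sequence $(u_n)\subset\mathcal{D}_0$ with $u_n\to u$ in $L^2(\Omega,g\,dxdy)$ and $\nabla_{x,y}u_n\to\nabla_{x,y}u$ in $L^2(\Omega)$. Writing \eqref{poincare} for each $u_n$ and passing to the limit — the left-hand side $\iint_\Omega g|u_n|^2$ converges to $\iint_\Omega g|u|^2$ by the first convergence, the right-hand side by the second — one obtains \eqref{poincare} for $u$. Symmetry and bilinearity of the proposed inner product are clear, and positivity follows at once: if $\|\nabla_{x,y}u\|_{L^2}=0$, then $\iint_\Omega g|u|^2=0$, hence $u=0$ almost everywhere since $g>0$. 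So $(\cdot,\cdot)_{H^{1,0}(\Omega)}$ is a genuine scalar product.

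For completeness, let $(u_n)$ be Cauchy in $H^{1,0}(\Omega)$. Applying the extended Poincaré inequality to $u_n-u_m$, the sequence is also Cauchy in $L^2(\Omega,g\,dxdy)$, so $u_n\to u$ in $L^2(\Omega,g\,dxdy)$ and $\nabla_{x,y}u_n\to w$ in $L^2(\Omega)$ for some $u$ and $w$. To see that $u\in H^{1,0}(\Omega)$ with $\nabla_{x,y}u=w$, I use a diagonal extraction: for each $n$, the sequence $(v_{n,k})_k\subset\mathcal{D}_0$ realizing the definition of $u_n\in H^{1,0}(\Omega)$ gives, by picking $k_n$ large enough, an element $w_n:=v_{n,k_n}\in\mathcal{D}_0$ with $\|w_n-u_n\|_{L^2(\Omega,g\,dxdy)}+\|\nabla_{x,y}w_n-\nabla_{x,y}u_n\|_{L^2(\Omega)}\le 1/n$. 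Triangle inequalities then yield $w_n\to u$ in $L^2(\Omega,g\,dxdy)$ and $\nabla_{x,y}w_n\to w$ in $L^2(\Omega)$, which is precisely the defining property, so $u\in H^{1,0}(\Omega)$ with $\nabla_{x,y}u=w$ and $u_n\to u$ in the $H^{1,0}$-norm.

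The step deserving the most care is the consistency of the ``gradient'' $\nabla_{x,y}u$ implicit in the definition of $H^{1,0}(\Omega)$: two approximating sequences for the same $u$ must produce the same $L^2$-limit of gradients, so that the inner product (and the claim ``$\nabla_{x,y}u=w$'' above) is well defined. This reduces to showing that $L^2(\Omega,g\,dxdy)$-convergence implies convergence in $\mathcal{D}'(\Omega)$, a consequence of the fact that $g$ is bounded below on every compact subset of $\Omega$ (it is positive throughout $\Omega$, equal to $1$ near $\Sigma$, and only degenerates near the bottom $\Gamma$). Once this is granted, $\nabla_{x,y}u$ is identified with the distributional gradient of $u$, the above construction becomes unambiguous, and the whole argument goes through.
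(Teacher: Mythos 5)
The paper gives no proof of Proposition~\ref{hilbert}; it simply cites page~422 of~\cite{ABZ1}, so there is no in-paper argument to compare against. Your proof is the standard "completion" argument one would expect to find there, and its structure is sound: extend \eqref{poincare} to $H^{1,0}(\Omega)$ by passing to the limit along the defining sequence, use the extended inequality to get positive definiteness of the bilinear form, and then establish completeness via a $1/n$-diagonal extraction from the approximating sequences of a Cauchy sequence $(u_n)$. All of this is correct and efficient.

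The one step that you rightly single out as delicate is the identification $w=\nabla_{x,y}u$ (equivalently, that an $H^{1,0}$ element has a single well-defined $L^2$ gradient). Your argument for it relies on the implication $L^2(\Omega,g\,dxdy)\hookrightarrow\mathcal{D}'(\Omega)$, which in turn needs $g^{-1}\in L^1_{\mathrm{loc}}(\Omega)$ (e.g.\ $g$ bounded below on compacts). This is not literally among the properties stated in Lemma~3.1 of the present paper (which only gives $g>0$, $g\in L^\infty_{\mathrm{loc}}$, $g\equiv 1$ near~$\Sigma$); it does hold for the explicit weight constructed in~\cite{ABZ1}, but a self-contained write-up should say so explicitly, since $g>0$ a.e.\ together with $g\in L^\infty_{\mathrm{loc}}$ does not by itself exclude $\operatorname{ess\,inf}_K g=0$ on a compact $K$ away from~$\Sigma$. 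With that property of $g$ granted, your argument is complete; without it, the injectivity of $u\mapsto\nabla_{x,y}u$ from $H^{1,0}(\Omega)$ into $L^2(\Omega)^{d+1}$, and hence the very definition of the inner product, would be in doubt. So: correct proof, with one hypothesis on $g$ being used implicitly that you should flag as coming from the construction in~\cite{ABZ1} rather than from the statement of the weighted Poincar\'e lemma as reproduced here.
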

 
Let $\psi \in H^{\mez}(\xR^d)$. 
One can construct  (see 
below after \eqref{u})   $\underline{\psi} \in H^1(\Omega)$ 
such that
$$
\supp \underline{\psi}\subset \{(x,y): \eta (t,x)-h\leq y \leq \eta(x)\}, 
\quad \underline{\psi}\arrowvert_{\Sigma} = \psi.
$$
Using Proposition~\ref{hilbert} we deduce that there exists a unique $u \in H^{1,0}(\Omega)$ 
such that, for all $\theta \in H^{1,0}(\Omega)$,
$$
\iint_\Omega \nabla_{x,y}u(x,y) \cdot \nabla_{x,y} \theta (x,y)dx dy 
= -\iint_\Omega \nabla_{x,y}\underline{\psi}(x,y) \cdot\nabla_{x,y}
 \theta (x,y)dx dy.
 $$

Then to solve the problem \eqref{dirichlet} we set $\Phi = u + \underline{\psi}.$ 

\begin{rema}
As for the usual Neumann problem  the meaning 
of the third condition in \eqref{dirichlet} is included 
in the definition of the space $H^{1,0}(\Omega). $ 
It can be written as in \eqref{dirichlet}  if the bottom $\Gamma$ is sufficiently smooth.  
\end{rema}
\subsection{The main result}
Let us assume that the Zakharov system  \eqref{system} 
has been solved on $I =(0,T)$, which means that we have found, for $ s>\mez + \frac{d}{2},$ a solution    
$$
(\eta, \psi) \in C^0(\overline{I}, H^{s+\mez}(\xR^d)\times  H^{s+\mez}(\xR^d)) ,
$$
 of the system

\begin{equation}\label{Zaharov}
\left\{
\begin{aligned}
&\partial_t \eta = G(\eta) \psi,\\
& \partial \psi = - g \eta - \mez \vert \nabla  \psi \vert ^2 
+\mez \frac{(\nabla \psi \cdot \nabla \eta + G(\eta) \psi)^2}{1 + \vert \nabla \eta \vert^2}.
\end{aligned}
\right.
 \end{equation}
Let $B,V$ be defined by \eqref{defi:BV}. Then $(B,V) \in C^0(I, H^{s-\mez}(\xR^d) \times H^{s-\mez}(\xR^d)).$ 
 

The above variational theory shows that one can solve (for fixed $t$) the problem
\begin{equation}\label{eq:Q}
\Delta_{x,y} Q = 0 \quad \text{in } \Omega, \quad Q\arrowvert_ \Sigma = g \eta + \mez(B^2 + \vert V \vert^2)\in H^{\mez}(\xR^d).
\end{equation}
     Here is the main result of this article.
\begin{theo}\label{maintheo}
 Let $\Phi$ and $Q$ be the variational solutions of the problems  \eqref{dirichlet} and  \eqref{eq:Q}. Set $P = Q-gy -\mez\vert \nabla_{x,y}\Phi\vert^2.$ Then $v := \nabla_{x,y}\Phi$ satisfies the Euler system
$$\partial_{t} v +(v\cdot \nabla_{x,y}) v + \nabla_{x,y} P 
= - g e_y   \quad \text{in }\Omega,  $$
together with the conditions    
\begin{equation}
\left\{
\begin{aligned}
 &\text{div}\,_{x,y} v =0,  \quad  \text{curl}\, _{x,y}  v =0  \quad \text{in } \Omega, \\ 
 & \partial_t \eta =  (1+| \nabla\eta|^2)^\mez \, (v\cdot n) \quad \text{on }  \Sigma, \\
& P= 0 \quad \text{on } \Sigma.
\end{aligned}
\right.
\end{equation}
 \end{theo}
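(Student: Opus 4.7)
The plan is to dispatch the three boundary and algebraic statements first, and then reduce the momentum equation to a uniqueness statement for the variational Dirichlet problem. The divergence-free and curl-free conditions are immediate: since $\Phi$ is the variational solution of \eqref{dirichlet}, it is harmonic on $\Omega$, hence $\cnx_{x,y} v = \Deltayx\Phi = 0$, and $v = \partialyx\Phi$ is a gradient so $\mathrm{curl}_{x,y}\,v = 0$. The boundary identity $P = 0$ on $\Sigma$ follows by taking traces: by construction $Q\arrowvert_{\Sigma} = g\eta + \mez(B^{2} + \la V\ra^{2})$, whereas $\la\partialyx\Phi\ra^{2}\arrowvert_{\Sigma} = B^{2} + \la V\ra^{2}$ by the very definition of $B$ and $V$, so the three contributions to $P = Q - gy - \mez\la\partialyx\Phi\ra^{2}$ cancel at the surface. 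The kinematic condition is a rewriting of the first equation in \eqref{Zaharov}: the very definition of $G(\eta)$ gives $G(\eta)\psi = \sqrt{1+\la\partialx\eta\ra^{2}}\,(v\cdot n)\arrowvert_{\Sigma}$, so $\partial_{t}\eta = G(\eta)\psi$ is exactly the claimed identity.

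The heart of the proof is the momentum equation. Using $\mathrm{curl}_{x,y}\,v = 0$ one writes $(v\cdot\partialyx)v = \mez\partialyx\la v\ra^{2}$, and since $-g e_{y} = -\partialyx(gy)$ the Euler equation is equivalent to
\[
\partialyx\bigl(\partial_{t}\Phi + P + gy + \mez\la\partialyx\Phi\ra^{2}\bigr) = 0 \quad\text{in }\Omega.
\]
With the definition of $P$, this collapses to showing that $\partial_{t}\Phi + Q$ is constant on $\Omega$, and I plan to prove that it is in fact identically zero by verifying that it solves the variational Dirichlet problem with zero data: it is harmonic (for $Q$ by construction; for $\partial_{t}\Phi$ by differentiating $\Deltayx\Phi = 0$ in $t$), it satisfies the Neumann condition at $\Gamma$ built into $H^{1,0}(\Omega)$, and its trace on $\Sigma$ vanishes. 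The trace identity is the only real algebra: differentiating $\psi(t,x) = \Phi(t,x,\eta(t,x))$ in $t$ gives $(\partial_{t}\Phi)\arrowvert_{\Sigma} = \partial_{t}\psi - B\,\partial_{t}\eta$, and inserting the two equations of \eqref{Zaharov} together with $\partialx\psi = V + B\partialx\eta$ yields, after cancellation, $(\partial_{t}\Phi)\arrowvert_{\Sigma} = -g\eta - \mez(B^{2} + \la V\ra^{2}) = -Q\arrowvert_{\Sigma}$. Uniqueness in Proposition~\ref{hilbert} then forces $\partial_{t}\Phi + Q \equiv 0$, which is Bernoulli's equation; taking the gradient gives Euler.

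The main obstacle will be justifying the formal manipulations of the preceding paragraph under the low-regularity assumption $s > \mez + d/2$ and for an arbitrary bottom. Three points stand out. First, $\Phi$ is only defined as a variational solution in the moving domain $\Omega(t)$, so neither $\partial_{t}\Phi$ nor the harmonicity of $\partial_{t}\Phi$ is a priori meaningful; I would address this by straightening the top boundary via $y\mapsto y - \eta(t,x)$ and rewriting \eqref{dirichlet} as an elliptic problem with $\eta$-dependent coefficients on a fixed domain, so that time differentiation acts on a fixed function space and harmonicity of $\partial_{t}\Phi$ can be read off from the transformed equation. Second, the trace computation for $(\partial_{t}\Phi)\arrowvert_{\Sigma}$ and the substitution of the Zakharov equations must be interpreted in the Sobolev scale that the variational theory provides; I would regularize $(\eta,\psi)$, carry out the computation in the smooth setting where everything is classical, and pass to the limit using the continuity of the Dirichlet--Neumann operator and of the variational solution with respect to $(\eta,\psi)$ in the relevant Sobolev norms. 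Third, to invoke uniqueness in $H^{1,0}(\Omega)$ one has to check that $\partial_{t}\Phi + Q$ actually lies in that space despite the possible roughness of $\Gamma$; this amounts to controlling the time derivative of the variational solution in a rough-bottom setting and is likely the most technical step of the argument.
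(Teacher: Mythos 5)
Your reduction is exactly the paper's: you dispose of $\cnx_{x,y}v=0$, $\mathrm{curl}_{x,y}v=0$, the kinematic condition and $P\arrowvert_\Sigma=0$ by inspection, and you observe that the momentum equation collapses to the Bernoulli identity $\partial_t\Phi=-Q$, to be proved by showing $\partial_t\Phi$ is a variational solution in $H^{1,0}(\Omega(t))$ with the right trace and invoking uniqueness. This is Proposition~\ref{theoprinc} of the paper, and your trace algebra $(\partial_t\Phi)\arrowvert_\Sigma=\partial_t\psi-B\,G(\eta)\psi=-g\eta-\mez(B^2+|V|^2)$ is correct. So the skeleton is right; what remains is the step you yourself flag as ``likely the most technical,'' and that is precisely where the paper's content lives (Lemmas~\ref{dtphi} and~\ref{chain}).

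Two of the devices you propose to close that gap do not quite work at the regularity $s>\mez+\frac d2$. First, the naive straightening $y\mapsto y-\eta(t,x)$ produces transformed coefficients involving $\nabla_x\eta\in H^{s-\mez}$ with no gain as one moves away from $\Sigma$; at this threshold the elliptic regularity needed to define the traces of $\nabla_{x,z}\tilde u$ and to control $\partial_t\tilde u$ fails. The paper instead uses the Lannes-type regularizing diffeomorphism \eqref{diffeo}, $\rho=(1+z)e^{\delta z\langle D_x\rangle}\eta-z\eta_*$ glued to a time-independent map below $z=-1$, which smooths $\eta$ in the interior and keeps $\rho$ fixed in $t$ near the (rough) bottom; both features are used essentially. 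Second, regularizing $(\eta,\psi)$ and passing to the limit presupposes stability of $\partial_t\Phi$ under perturbation of $(\eta,\psi)$ in $H^{1,0}$, which is circular: establishing that bound \emph{is} Lemma~\ref{dtphi}. The paper obtains it directly by taking difference quotients of the variational identity \eqref{egvar} with the specific test functions $\theta_1,\theta_2$ of \eqref{thetaj} (chosen so that both transplant to the same $\tilde\theta$), estimating the resulting terms $A^k_{j,\eps}$ uniformly in $\eps$, and extracting a weak-star limit; and separately (Lemma~\ref{chain}) it justifies, by an integration-by-parts argument in the straightened variables, the distributional chain rule $\partial_t u=\partial_t\tilde u\circ\kappa+\partial_t\kappa\,\partial_z\tilde u\circ\kappa$, which you use implicitly when you assert ``differentiating $\psi=\Phi(t,x,\eta)$ in $t$.'' Without these two lemmas the argument does not close at the stated regularity.
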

The rest of the paper is devoted to the proof of this result. We proceed in several steps.

\subsection{Straightenning the free boundary}
   
First of all if condition $(H_t)$ is satisfied  on $I,$ for $T$ 
small enough, one can find $\eta_*\in L^\infty(\xR^d)$ independent of $t$ such that 
\begin{equation}\label{eta}
\left\{
\begin{aligned}
&(i)\quad  \nabla_x \eta_* \in H^\infty(\xR^d),\quad \Vert \nabla_x \eta_* \Vert_{L^\infty(\xR^d)}\leq C \Vert \eta \Vert_{L^\infty(I,H^{s+\mez}(\xR^d))},\\ 
&(ii) \quad \eta(t,x)-h \leq \eta_*(x) \leq \eta(t,x)- \frac{h}{2}, \quad \forall (t,x)\in I\times \xR^d,\\
&(iii) \quad \Gamma \subset \{(x,y) \in \mathcal{O}: y<\eta_*(x)\}.
\end{aligned}
\right.
\end{equation}
Indeed  using the first equation in \eqref{Zaharov} we have 
\begin{equation*}
\begin{aligned}
 \Vert \eta(t,\cdot)-\eta_0\Vert_{L^\infty(\xR^d)} &\leq \int_0^t\Vert G(\eta)\psi(\sigma,\cdot)\Vert_{H^{s-\mez}(\xR^d)} d\sigma\\ 
 &\leq TC\big(\Vert(\eta, \psi)\Vert_{L^\infty(I, H^{s+\mez}(\xR^d) \times H^{s+\mez}(\xR^d))}\big).
 \end{aligned}
 \end{equation*}
Therefore taking $T$ small enough 
we make  $\Vert \eta(t,\cdot)-\eta_0\Vert_{L^\infty(\xR^d)}$ 
as small as we want. Then we take      
$ \eta_*(x) = -\frac{2h}{3} + e^{-\delta\vert D_x \vert}\eta_0 $ 
and   writing
$$\eta_*(x) = -\frac{2h}{3} + \eta(t,x) - (\eta(t,x) - \eta_0( x)) 
+ (e^{-\delta\vert D_x \vert}\eta_0 - \eta_0(x)),
$$
we obtain \eqref{eta}.

In what follows we shall set 
\begin{equation}\label{lesomega}
\left\{
\begin{aligned}
&\Omega_1(t)  = \{(x,y): x \in \xR^d, \eta_*(x)<y< \eta(t,x)\},\\
   &\Omega_1 = \{(t,x,y): t\in I, (x,y)\in \Omega_1(t)\}, 
 \quad \Omega_2  = \{(x,y)\in \mathcal{O}: y\leq\eta_*(x)\},\\ 
&\tilde{\Omega}_1 = \{(x,z): x \in \xR^d, z \in (-1,0)\},\\  
 &\tilde{\Omega}_2 = \{(x,z) \in \xR^d \times (-\infty,-1]: (x, z+1+\eta_*(x))  \in \Omega_2\} \\
 &\tilde {\Omega} = \tilde{\Omega_1} \cup \tilde{\Omega_2} 
   \end{aligned}
\right.
 \end{equation}

Following Lannes~(\cite{LannesJAMS}), for $t\in I$ consider the map $(x,z)\mapsto (x, \rho(t,x,z))$ 
from $\tilde{\Omega} $ to $\xR^{d+1}$ defined by
\begin{equation}\label{diffeo}
\left\{
\begin{aligned}
\rho(t,x,z) &=  (1+z)e^{\delta z \langle D_x \rangle} \eta(t,x) -z \eta_*(x) \quad \text{if } (x,z) \in \tilde{\Omega}_1\\
\rho(t,x,z) &= z+1+ \eta_*(x) \quad  \text{if } (x,z) \in \tilde{\Omega}_2.
\end{aligned}
\right.
\end{equation}
where $\delta$ is chosen such that
$$\delta \Vert \eta \Vert_{L^ \infty(I, H^{s+\mez}(\xR^d))}:= \delta_0 <<1.$$
 
 Notice that since $s>\mez + \frac{d}{2},$ taking $\delta $ 
 small enough and using \eqref{eta} $(i), (ii),$ we obtain the estimates
\begin{equation}\label{rhokappa}
\begin{aligned}
(i)& \quad   \partial_z \rho(t ,x,z)   \geq \min\big({\frac{h}{3}, 1}\big)
\quad \forall (t,x,z)\in I \times \tilde{\Omega} ,\\
(ii)&\quad \Vert \nabla_{x,z} \rho\Vert_{ L^\infty(I \times \tilde{\Omega} )}
\leq C (1+\Vert \eta \Vert_{L^\infty(I,H^{s+\mez}(\xR^d))}) .\\
\end{aligned}
\end{equation}
It follows from \eqref{rhokappa} $(i)$  that the   map $(t,x,z) \mapsto (t,x,\rho(t,x,z))$ is a  diffeomorphism from $I \times \tilde{\Omega} $ to $\Omega$   which is of class $W^{1,\infty}.$   
 
We denote by $\kappa$ the inverse map of $\rho$:
\begin{equation}\label{kappa}
\begin{aligned}
(t,x,z)\in I\times \tilde{\Omega}  ,  (t,x,\rho(t,x,z)) 
&= (t,x,y)\\
  \Longleftrightarrow (t,x,z) &= (t,x,\kappa(t,x,y)),  (t,x,y) \in \Omega.
\end{aligned}
\end{equation}

\subsection{The Dirichlet-Neumann operator}
Let $\Phi$ be the variational solution described above (with fixed $t$) 
of the problem
\begin{equation}\label{var}
\left\{
\begin{aligned} 
&\Delta_{x,y} \Phi =0 \quad \text{in } \Omega (t),\\
&\Phi \arrowvert_{\Sigma(t)} = \psi(t,\cdot) ,\\
&\partial_\nu \Phi \arrowvert_{\Gamma} = 0.
\end{aligned}
\right.
\end{equation}
Let us recall that 
\begin{equation}\label{u}
\Phi = u + \underline{\psi}
\end{equation}
where $u \in H^{1,0}(\Omega(t))$ and $\underline{\psi}$ is an extension of $\psi$ to $\Omega (t).$ 
  
Here is a construction of $\underline{\psi.}$ 
Let  $\chi \in C^\infty(\xR), \chi(a)= 0$ if $a\leq-1,\chi(a) = 1$ if $a\geq -\mez.$ 
Let $\underline{\tilde{\psi}}(t,x,z) = \chi(z) e^{z \langle D_x \rangle} \psi(t,x)$ 
for $z\leq 0.$ It is classical that $ \underline{\tilde{\psi}}\in L^\infty(I,H^1(\tilde{\Omega}))$ 
if $\psi \in L^\infty(I,H^\mez(\xR^d))$ and
$$
\Vert \underline{\tilde{\psi}}  \Vert_{L^\infty(I, H^1(\tilde{\Omega}))}
\leq C \Vert \psi \Vert_{L^\infty(I, H^\mez(\xR^d))}.
$$
Then we set 
\begin{equation}\label{psisoul}
 \underline{\psi}(t,x,y)= \underline{\tilde{\psi}}\big(t, x, \kappa(t,x,y)\big).
 \end{equation}
Since $\eta \in C^0(I, W^{1,\infty}(\xR^d))$ 
we have $\underline{\psi}(t, \cdot)  \in H^1(\Omega(t)),\,  
\underline{\psi}\arrowvert_{\Sigma(t)} = \psi$ 
and 
$$
\Vert \underline{\psi}(t, \cdot) \Vert_{H^1(\Omega(t))} 
\leq C\big(\Vert \eta \Vert_{L^\infty(I, W^{1,\infty}(\xR^d))}\big)
\Vert \psi \Vert_{L^\infty(I, H^\mez(\xR^d))}.
$$
  
Then we define the Dirichlet-Neumann operator by
\begin{equation}
\begin{aligned}
G(\eta)\psi (t,x) &= \sqrt{1+\vert \nabla \eta \vert^2}\partial_n \Phi\arrowvert_{\Sigma}\\
&=(\partial_y \Phi)(x, \eta(t,x)) - \nabla_x \eta(t,x) \cdot(\nabla_x \Phi)(t,x,\eta(t,x)). 
\end{aligned}
\end{equation}
It has been shown in \cite{ABZ3} (see~$\S3$) that $G(\eta)\psi $ is well defined in 
$C^0(\overline{I},H^{-\mez}(\xR^d))$ 
if $\eta\in C^0(\overline{I},W^{1,\infty}(\xR^d))$ 
and $\psi \in C^0(\overline{I},H^{\mez}(\xR^d))$.
\begin{rema}
 Recall that we have set
\begin{equation}\label{omega}
\Omega(t) = \{(x,y) \in \mathcal{O}: y< \eta(t,x)\}, 
\quad \Omega   = \{(t,x,y): t\in I, (x,y)\in \Omega(t)\}.
\end{equation}
For a function $f\in L^1_{loc}(\Omega)$ if  $\partial_t f$ 
denotes its derivative in the sense of distributions we have
\begin{equation}\label{derf}
\langle \partial_t f, \varphi \rangle 
= \lim_{\eps \to 0}\Big\langle \frac{f(\cdot +\eps,\cdot,\cdot) - f(\cdot,\cdot,\cdot)}{\eps}, \varphi \Big\rangle, \quad \forall \varphi \in C_0^\infty(\Omega).
\end{equation}
This point should be clarified due to the 
particular form of the set $\Omega$ 
since we have to show that if $(t,x,y) \in \supp \varphi = K$ 
then 
$(t+\eps,x,y)\in \Omega $ for $\eps$ sufficiently small independently of the point $(t,x,y)$. 
This is true. Indeed if $(t,x,y) \in K$   there exists a fixed $\delta>0$ 
(depending only on $K,\eta $) such that $y\leq \eta(t,x) - \delta.$ 
Since by \eqref{Zaharov}
$$
\vert \eta(t +\eps,x) - \eta(t,x)\vert \leq \eps \Vert G(\eta)\psi \Vert_{L^\infty(I\times \xR^d)}\leq \eps C
$$ where 
$C=  C\big(\Vert(\eta,\psi)\Vert_{L^\infty(I,H^{s+\mez}(\xR^d)\times H^{s+\mez}(\xR^d))}\big),$ 
we have if $\eps < \frac{\delta}{C} $,
$$
y-\eta(t+\eps,x) = y-\eta(t ,x) +  \eta(t ,x) - \eta(t+\eps,x) \leq -\delta + \eps C<0.
$$

Notice that since 
$\eta \in C^0(\overline{I}, H^{s+\mez}(\xR^d)), \partial_t \eta = G(\eta) \psi \in C^0(\overline{I}, H^{s - \mez}(\xR^d))$ 
and \\ $s> \mez + \frac{d}{2}$ we have $\rho \in W^{1,\infty}(I\times \tilde{\Omega}).$
 \end{rema}
The main step in the proof of Theorem \ref{maintheo}  is the following.
\begin{prop}\label{theoprinc}
Let $\Phi$ be defined by \eqref{var} and $Q\in H^{1,0}(\Omega(t))$ by \eqref{eq:Q}. Then for all $t\in I$
\begin{equation*}
\begin{aligned}
& (i)  \quad \partial_t \Phi(t, \cdot) \in H^{1,0}(\Omega(t)),  \\
 & (ii)  \quad  \partial_t \Phi = -Q \quad \text{in } \mathcal{D}'(\Omega).
\end{aligned}
\end{equation*}
\end{prop}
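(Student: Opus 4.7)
The overall strategy is to show that $W:=\partial_t\Phi+Q$ is harmonic on $\Omega(t)$, has vanishing trace on $\Sigma$, and belongs to $H^{1,0}(\Omega(t))$; uniqueness of the variational Dirichlet solution from Proposition~\ref{hilbert} then forces $W\equiv 0$, which is (ii), while (i) is a by-product of the regularity analysis.

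The first step is the boundary trace identity. Differentiating $\Phi(t,x,\eta(t,x))=\psi(t,x)$ in $t$ yields $\partial_t\Phi|_{\Sigma}=\partial_t\psi-B\,G(\eta)\psi$. Using \eqref{defi:BV} in the form $G(\eta)\psi=(1+|\nabla\eta|^2)B-\nabla\eta\cdot\nabla\psi$ and $|\nabla\psi|^2=|V|^2+2BV\cdot\nabla\eta+B^2|\nabla\eta|^2$, one finds
\[
\frac{1}{2}\frac{(\nabla\eta\cdot\nabla\psi+G(\eta)\psi)^2}{1+|\nabla\eta|^2}=\frac{1}{2}B^2(1+|\nabla\eta|^2),\qquad B\,G(\eta)\psi=B^2-B\,V\cdot\nabla\eta,
\]
so substituting the second equation of \eqref{Zaharov} into $\partial_t\psi-B\,G(\eta)\psi$ and simplifying gives $\partial_t\Phi|_{\Sigma}=-\bigl(g\eta+\tfrac{1}{2}(B^2+|V|^2)\bigr)=-Q|_{\Sigma}$, whence $W|_{\Sigma}=0$. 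Harmonicity of $W$ in $\mathcal D'(\Omega)$ is routine: integration by parts on each time slice gives $\Delta_{x,y}\Phi=0$ in $\mathcal D'(\Omega)$, and the time-translation argument of the Remark containing \eqref{derf} lets one commute $\partial_t$ with $\Delta_{x,y}$, so $\Delta_{x,y}\partial_t\Phi=0$ and, combined with $\Delta_{x,y}Q=0$, $\Delta_{x,y}W=0$.

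The core step is to show $W(t,\cdot)\in H^{1,0}(\Omega(t))$. I would transport the problem to the fixed domain $\tilde\Omega$ via $\rho$: setting $\tilde\Phi(t,x,z)=\Phi(t,x,\rho(t,x,z))$ and $\tilde Q(t,x,z)=Q(t,x,\rho(t,x,z))$, the chain rule gives
\[
W\circ\rho=\partial_t\tilde\Phi-(\partial_y\Phi\circ\rho)\,\partial_t\rho+\tilde Q,
\]
whose trace at $\{z=0\}$ vanishes by the previous step. Bounding $\nabla_{x,z}(W\circ\rho)$ in $L^2(\tilde\Omega)$ reduces to estimating $\partial_t\tilde\Phi$ in an $H^1$-type space on $\tilde\Omega$: since $\tilde\Phi$ satisfies the pulled-back elliptic equation with coefficients Lipschitz in $t$ (as \eqref{rhokappa}(ii) together with $\partial_t\eta=G(\eta)\psi\in L^\infty(I;H^{s-1/2})$ give $\rho\in W^{1,\infty}(I\times\tilde\Omega)$), differentiating this equation in $t$ and applying Lax--Milgram with the coercivity supplied by Proposition~\ref{hilbert} on $\tilde\Omega$ produces $\partial_t\tilde\Phi$ with square-integrable $(x,z)$-gradient; pulling back with $\kappa$ yields $W\in H^{1,0}(\Omega(t))$, and uniqueness concludes.

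The main obstacle is precisely this last step. Because no smoothness is assumed on the bottom $\Gamma$ and $s$ sits only just above the Sobolev embedding threshold $1/2+d/2$, classical elliptic regularity is unavailable, so every estimate must be performed within the variational framework on the straightened domain, with careful handling of the merely Lipschitz time regularity of $\rho$ and of the fact that $\partial_t\psi$ lies only in $H^{s-1/2}$.
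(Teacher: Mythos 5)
Your proposal follows essentially the same route as the paper: compute the trace $\partial_t\Phi\arrowvert_\Sigma=\partial_t\psi-B\,G(\eta)\psi=-(g\eta+\tfrac12(B^2+|V|^2))$ via the Zakharov system and the identities $\nabla_x\Phi\arrowvert_\Sigma=V$, $\partial_y\Phi\arrowvert_\Sigma=B$; straighten the domain via $\rho$; establish that the time derivative of the pulled-back potential has square-integrable gradient; and conclude by uniqueness of the variational solution. What you describe as ``differentiating the pulled-back equation in $t$ and applying Lax--Milgram'' is precisely the content of the paper's Lemma~\ref{dtphi}, but since $\rho$ is only $W^{1,\infty}$ in $t$, the paper makes this rigorous by a difference-quotient argument (with carefully matched test functions $\theta_1,\theta_2$ whose pullbacks coincide, a decomposition into terms $A^k_{j,\eps}$, and a weak-$*$ compactness extraction in $(L^\infty\cap C^0)(I,H^{1,0}(\tilde\Omega))$ to get the conclusion for \emph{every} $t_0$, not merely a.e.); the distributional chain rule you use to write $W\circ\rho$ is likewise a genuine step (Lemma~\ref{chain}), not automatic at this regularity. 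So the approach is the same, but the two technical lemmas that carry the proof are sketched rather than established.
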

This result will be proved in \S 3.6.
   \subsection{Preliminaries}
   
If $f $ is a function defined on $\Omega$ 
we shall denote by $\tilde{f}$ its image 
by the diffeomorphism $(t,x,z)\mapsto (t,x,\rho(t,x,z))$. 
Thus we have 
\begin{equation}\label{image}
\tilde{f}(t,x,z)  = f(t,x,\rho(t,x,z))     \Leftrightarrow f(t,x,y)  = \tilde{f}\big(t,x,\kappa(t,x,y)\big).  
\end{equation}
Formally  we have the following equalities for $(t,x,y) =  (t,x,\rho(t,x,z)) \in \Omega$ 
and $\nabla = \nabla_x$
\begin{equation}\label{dt}
\left\{
\begin{aligned}
\partial_yf(t,x,y) &= \frac{1}{\partial_z \rho}\partial_z \tilde{f}(t,x,z)
 \Leftrightarrow \partial_z \tilde{f}(t,x,z) = \partial_z \rho(t,x,\kappa(t,x,y)) \partial_y f(t,x,y),\\
\nabla  f(t,x,y) &= \big(\nabla \tilde{f} - \frac{\nabla  \rho}{\partial_z \rho}\, \partial_z\tilde{f}\big)(t,x,z) \Leftrightarrow  \nabla  \tilde{f}(t,x,z) = \big( \nabla f + \nabla \rho\,\,  \partial_y f\big)(t,x,y),\\
\partial_t  f(t,x,y) &= \big( \partial_t \tilde{f}  
+ \partial_t \kappa (t,x,y)  \partial_z\tilde{f}\big)(t,x,\kappa(t,x,y)). 
\end{aligned}
\right. 
\end{equation}
   
We shall set in what follows
\begin{equation}\label{lambda}
\Lambda_1 = \frac{1}{\partial_z \rho} \partial_z,\quad  \Lambda_2 
= \nabla_x - \frac{\nabla_x \rho}{\partial_z \rho}\partial_z 
\end{equation}
Eventually recall that if  $u$ is the function defined by \eqref{u} we have
\begin{equation}\label{egvar}
\iint_{\Omega(t)} \nabla_{x,y}u(t,x,y) \cdot \nabla_{x,y}\theta(x,y) dx dy 
= - \iint_{\Omega(t)} \nabla_{x,y}\underline{\psi}(t, x,y) \cdot \nabla_{x,y}\theta(x,y) dx dy 
\end{equation}
for all $\theta \in H^{1,0}(\Omega(t)) $ which implies  that for $t\in I,$
\begin{equation}\label{est}
 \Vert \nabla_{x,y} u(t, \cdot)\Vert_{L^2(\Omega(t))} \leq  C(\Vert \eta \Vert_{L^\infty(I, W^{1,\infty}(\xR^d)}) \Vert \psi \Vert_{L^\infty(I,H^\mez(\xR^d))}.
\end{equation}

Let $u$ be defined by \eqref{u}. 
Since $(\eta,\psi)\in C^0(\overline{I}, H^{s+\mez}(\xR^d)\times  H^{s+\mez}(\xR^d))$ 
the elliptic regularity theorem proved in \cite{ABZ3}, (see Theorem 3.16), 
shows that,  
$$
\partial_z \tilde{u}, \nabla_x \tilde{u} 
\in C_z^0([-1,0], H^{s-\mez}(\xR^d))\subset C^0([-1, 0] \times \xR^d),
$$
since $s-\mez > \frac{d}{2}.$

It follows from \eqref{dt} that $\partial_y u$ and $\nabla_x u$ have a trace on $\Sigma$ and 
$$
\partial_y u\arrowvert_\Sigma 
= \frac{1}{\partial_z \rho(t,x,0)} \partial_z \tilde{u}(t,x,0), \quad \nabla_x u\arrowvert_\Sigma 
= \big(\nabla_x \tilde{u} - \frac{\nabla_x \eta}{\partial_z \rho(t,x,0)} \partial_z \tilde{u}\big)(t,x,0).
$$
Since $\tilde{u}(t,x,0) =0$ it follows that
$$
\nabla_x u\arrowvert_\Sigma + (\nabla_x \eta) \partial_y u\arrowvert_\Sigma = 0
$$
from which we deduce, since $\Phi = u + \underline{\psi}$,  
\begin{equation}\label{debutV}
\nabla_x \Phi \arrowvert_\Sigma + (\nabla_x \eta) \partial_y \Phi\arrowvert_\Sigma = \nabla_x \psi.
\end{equation}
On the other hand one has
\begin{equation}\label{suiteV}
G(\eta)\psi= \big(\partial_y \Phi - \nabla_x \eta \cdot\nabla_x \Phi \big)\arrowvert_\Sigma.
\end{equation}
It follows from \eqref{debutV} and \eqref{suiteV} that we have 
\begin{equation}\label{finV}
\nabla_x \Phi\arrowvert_\Sigma = V, \quad \partial_y \Phi\arrowvert_\Sigma = B.
\end{equation}
According to \eqref{eq:Q}, $P = Q - gy - \mez\vert \nabla_{x,y} \Phi \vert^2$ has a trace on $\Sigma$ and $P\arrowvert_ \Sigma =0.$

\subsection{The regularity results}\label{S:pressure}

The main steps in the proof of Proposition \ref{theoprinc} are the following.
\begin{lemm}\label{dtphi}
Let  $\tilde{u}$ be defined by \eqref{image} and $\kappa$ by \eqref{kappa}. Then for all $t_0 \in I$
  the function $(x,y) \mapsto U(t_0,x,y):= \partial_t   \tilde{u}(t_0,x,\kappa (t_0,x,y))$ belongs to $H^{1,0}(\Omega(t_0)).$ Moreover there exists  a function $\mathcal{F}: \xR^+ \to \xR^+$ such that    $$ \sup_{t\in I} \iint_{\Omega(t)} \vert \nabla_{x,y}U(t,x,y)\vert^2 dx dy \leq \mathcal{F}(\Vert (\eta, \psi)\Vert_{L^\infty(I,H^{s+\mez}(\xR^d) \times H^{s+\mez}(\xR^d))}).$$
 \end{lemm}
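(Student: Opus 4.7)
The strategy is to pull back the variational problem solved by $u$ from the moving domain $\Omega(t)$ to the fixed strip $\tilde\Omega$ via the diffeomorphism $(x,z)\mapsto(x,\rho(t,x,z))$, to differentiate the resulting identity in $t$, and to solve the elliptic equation so obtained for $\partial_t \tilde u$ by Lax--Milgram on $\tilde\Omega$. Pushing the solution back by $\kappa$ will produce $U$. The crucial input is the time-regularity of the coefficients and of the source, which is ultimately delivered by the Zakharov equations \eqref{Zaharov}.

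Using the change-of-variables relations \eqref{dt}, the identity \eqref{egvar} transfers to
\begin{equation*}
\iint_{\tilde\Omega} \mathcal{A}(t;x,z)\,\pxz\tilde u \cdot \pxz\tilde\theta\,dx\,dz
= -\iint_{\tilde\Omega} \mathcal{A}(t;x,z)\,\pxz\tilde{\underline\psi}\cdot\pxz\tilde\theta\,dx\,dz
\end{equation*}
for every $\tilde\theta\in H^{1,0}(\tilde\Omega)$, where $\mathcal{A}$ is a symmetric matrix whose entries are built algebraically from $\partialx\rho,\partial_z\rho$ and multiplied by the Jacobian $\partial_z\rho$. By \eqref{rhokappa}, $\mathcal{A}$ is uniformly bounded and coercive on $I\times\tilde\Omega$. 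Differentiating formally in $t$, the candidate $v:=\partial_t\tilde u$ should satisfy
\begin{equation*}
\iint_{\tilde\Omega} \mathcal{A}\,\pxz v\cdot\pxz\tilde\theta\,dx\,dz
= -\iint_{\tilde\Omega}\bigl[(\partial_t\mathcal{A})\pxz(\tilde u+\tilde{\underline\psi}) + \mathcal{A}\pxz(\partial_t\tilde{\underline\psi})\bigr]\cdot\pxz\tilde\theta\,dx\,dz.
\end{equation*}
The first equation in \eqref{Zaharov} gives $\partial_t\eta=G(\eta)\psi\in C^0(I,H^{s-\mez}(\xR^d))$, and the second equation, combined with tame product estimates in $H^{s-\mez}(\xR^d)$ (which is an algebra since $s-\mez>d/2$), gives $\partial_t\psi\in C^0(I,H^{s-\mez}(\xR^d))$. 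These control $\partial_t\rho$, hence $\partial_t\mathcal{A}$, in $L^\infty(I\times\tilde\Omega)$, and $\pxz(\partial_t\tilde{\underline\psi})$ in $L^\infty(I,L^2(\tilde\Omega))$. Together with the bound \eqref{est} for $\pxz\tilde u$, the right-hand side defines a continuous linear form on $H^{1,0}(\tilde\Omega)$, and Lax--Milgram produces a unique $v(t_0,\cdot,\cdot)\in H^{1,0}(\tilde\Omega)$ satisfying the required uniform estimate.

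To identify $v$ with $\partial_t\tilde u$, I would use difference quotients $\tau_\eps\tilde u:=\eps^{-1}(\tilde u(\cdot+\eps,\cdot,\cdot)-\tilde u(\cdot,\cdot,\cdot))$: subtracting the pulled-back variational identities at times $t+\eps$ and $t$ gives an equation for $\tau_\eps\tilde u$ of the same structure, with time-derivatives of $\mathcal{A}$ and $\tilde{\underline\psi}$ replaced by their difference quotients. The uniform bounds above together with coercivity yield a uniform $H^{1,0}(\tilde\Omega)$ bound on $\tau_\eps\tilde u$; extracting a weak limit and passing to the limit in the equation identifies the limit with $v$. Since $\tilde u(t,x,0)\equiv 0$ one has $v(t_0,x,0)=0$, so pushing back by $\kappa$ gives $U(t_0,\cdot,\cdot)\in H^{1,0}(\Omega(t_0))$, and the chain rule \eqref{dt} together with the change-of-variables formula produces the announced $L^2$-bound on $\partialyx U$, with a constant depending only on $\Vert(\eta,\psi)\Vert_{L^\infty(I,H^{s+\mez}\times H^{s+\mez})}$. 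The most delicate point is the control of $\partial_t\tilde{\underline\psi}$, because it requires a Sobolev bound on $\partial_t\psi$; this is where the algebra property of $H^{s-\mez}(\xR^d)$ and the tame estimates for $G(\eta)\psi$ from \cite{ABZ3} are decisive.
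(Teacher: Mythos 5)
Your overall strategy — pull back the variational identity to the fixed strip $\tilde\Omega$ via $\rho$, use difference quotients in $t$, exploit the uniform coercivity coming from $\partial_z\rho\ge h/3$, and extract a weak limit — is the same as the paper's. The Lax--Milgram detour (first constructing a candidate $v$, then identifying it with the difference-quotient limit) is a harmless reorganization; the paper works with the difference quotients directly, choosing test functions $\theta_1\in H^{1,0}(\Omega(t_0+\eps))$ and $\theta_2\in H^{1,0}(\Omega(t_0))$ so that both pull back to the same $\tilde\theta=\eps^{-1}(\tilde u(t_0+\eps)-\tilde u(t_0))$, which is exactly your ``subtract the pulled-back identities with a common test function'' step.

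However, there is a genuine gap in your estimate of the source term. You claim that $\partial_t\eta\in H^{s-\mez}$ and $\partial_t\psi\in H^{s-\mez}$ ``control $\partial_t\rho$, hence $\partial_t\mathcal{A}$, in $L^\infty(I\times\tilde\Omega)$'' and then pair $\partial_t\mathcal{A}$ with the $L^2$ bound \eqref{est} on $\pxz\tilde u$. This does not hold in the regime $s>\mez+\tfrac d2$ of the lemma. Although $\partial_t\rho=(1+z)e^{\delta z\langle D_x\rangle}\partial_t\eta$ is indeed in $L^\infty$, the entries of $\mathcal{A}$ involve $\nabla_x\rho$ and $\partial_z\rho$; thus $\partial_t\mathcal{A}$ involves $\nabla_x\partial_t\rho$ and $\partial_z\partial_t\rho$, which behave like a \emph{full spatial derivative} of $G(\eta)\psi$, i.e.\ live in $H^{s-\tdm}$ only. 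In fact, $\partial_z\partial_t\rho=e^{\delta z\langle D_x\rangle}G(\eta)\psi+(1+z)\delta e^{\delta z\langle D_x\rangle}\langle D_x\rangle G(\eta)\psi$ blows up like $|z|^{-1}$ as $z\to0^-$, so it is only $L^2$, never $L^\infty$ up to the top boundary unless $s>\tdm+\tfrac d2$. The paper's estimates (\eqref{nablarho} and \eqref{estrho2}) accordingly bound $\nabla_x\partial_t\rho$ and $\partial_z\partial_t\rho$ only in $L^2(\tilde\Omega)$, which is what the analogue of Lemma~\ref{estbeta} requires. To close the argument you must swap the roles of the two factors: use the elliptic regularity theorem of \cite{ABZ3} (Theorem~3.16 there) to upgrade $\pxz\tilde u$ from $L^2$ to $C_z^0([-1,0],H^{s-\mez}_x)\subset L^\infty$ (and the explicit Poisson formula for $\tilde{\underline\psi}$ to get $\pxz\tilde{\underline\psi}\in L^2_zL^\infty_x$), and pair these $L^\infty$-type quantities with the $L^2$-type quantity $\partial_t\mathcal{A}$. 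This elliptic regularity input is the missing ingredient; without it the right-hand side of your differentiated variational identity is not a bounded linear form on $H^{1,0}(\tilde\Omega)$, and Lax--Milgram does not apply.
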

 
\begin{lemm}\label{chain}
In  the sense of distributions on $\Omega $ we have the chain rule
$$\partial_t u(t,x,y) = \partial_t \tilde{u}(t,x,\kappa(t,x,y)) 
+ \partial_t \kappa(t,x,y) \partial_z\tilde{u}(t,x, \kappa(t,x,y)).
$$
\end{lemm}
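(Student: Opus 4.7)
The plan is to establish the identity by duality: test against an arbitrary $\varphi \in C_c^\infty(\Omega)$, transport the computation to the fixed cylinder $I\times\tilde{\Omega}$ via the change of variables $y=\rho(t,x,z)$ (whose Jacobian $\partial_z\rho$ is bounded above and below by \eqref{rhokappa}), integrate by parts distributionally there, and change back. The key algebraic inputs are, first, the classical a.e.\ chain rule $(\partial_t\varphi)(t,x,\rho) = \partial_t\tilde\varphi - (\partial_t\rho/\partial_z\rho)\partial_z\tilde\varphi$ for the Lipschitz composition $\tilde\varphi(t,x,z):=\varphi(t,x,\rho(t,x,z))$, and second, the identity $\partial_t\kappa(t,x,y) = -(\partial_t\rho/\partial_z\rho)(t,x,\kappa(t,x,y))$ obtained by differentiating $\rho(t,x,\kappa)=y$ in $t$.

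After the change of variables and the substitution of the chain rule for $\tilde\varphi$, one is left with
$$\langle\partial_t u,\varphi\rangle = -\int_{I\times\tilde{\Omega}} \tilde u\, \partial_z\rho\, \partial_t\tilde\varphi\, dt\, dx\, dz + \int_{I\times\tilde{\Omega}} \tilde u\, \partial_t\rho\, \partial_z\tilde\varphi\, dt\, dx\, dz.$$
Integrating the first term by parts in $t$ and the second in $z$, expanding the derivatives of the products $\tilde u\,\partial_z\rho$ and $\tilde u\,\partial_t\rho$, and using the commutation $\partial_t\partial_z\rho = \partial_z\partial_t\rho$ (valid because of the $e^{\delta z\langle D_x\rangle}$-smoothing in \eqref{diffeo} combined with $\partial_t\eta = G(\eta)\psi \in C^0(\overline{I}, H^{s - 1/2}(\xR^d))$), the $\tilde u\,\partial_t\partial_z\rho$ contributions cancel, so that only $\int [\partial_t\tilde u\, \partial_z\rho - \partial_z\tilde u\, \partial_t\rho]\tilde\varphi\,dt\,dx\,dz$ remains. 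Changing back to $(t,x,y)$ and using the formula for $\partial_t\kappa$ produces exactly $\int_{\Omega} [\partial_t\tilde u(t,x,\kappa) + \partial_t\kappa\, \partial_z\tilde u(t,x,\kappa)]\varphi$, which is the claimed chain rule.

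The main obstacle is to rigorously justify the distributional integration by parts, since $\tilde\varphi$ is only Lipschitz and $\partial_t\tilde u$ only a distribution a priori. I would regularize $\tilde u$ in the time variable by convolution, $\tilde u_\eps := \tilde u \ast_t \chi_\eps$, which is $C^\infty$ in $t$ and inherits the spatial continuity $\partial_z\tilde u_\eps,\nabla_x\tilde u_\eps \in C^0_z([-1,0],H^{s-1/2}(\xR^d)) \subset C^0([-1,0]\times\xR^d)$ from the elliptic estimates of \cite{ABZ3}. The classical chain rule then applies pointwise a.e.\ to $u_\eps(t,x,y):=\tilde u_\eps(t,x,\kappa(t,x,y))$, and the passage to the limit $\eps \to 0$ in $\mathcal{D}'(\Omega)$ is precisely afforded by Lemma \ref{dtphi}, which gives the uniform $H^{1,0}(\Omega(t))$ control of $\partial_t\tilde u(t,x,\kappa(t,x,y))$ needed so that the right-hand side of the chain rule is a well-defined distribution on $\Omega$ and the limits match.
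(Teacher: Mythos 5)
Your argument is correct in substance but takes a genuinely different route from the paper. The paper never integrates by parts in $t$: it instead writes the difference quotient of $u(t,x,y)=\tilde u(t,x,\kappa(t,x,y))$ as the sum $v_\eps+w_\eps$ in \eqref{I+J}, where $w_\eps$ isolates the variation of the first time slot of $\tilde u$ (frozen $\kappa$) and $v_\eps$ isolates the variation of the $\kappa$ argument (frozen time slot). After the change of variables each piece converges by elementary means --- $K_\eps$ via the weak-$*$ convergence of the difference quotient $\tilde U_\eps\to\partial_t\tilde u$ granted by \eqref{Uborne}, and $J_\eps$ by recognizing a $\partial_z$-derivative of a continuous family, passing to the limit first, and then integrating by parts in $z$ only (against $\partial_z\tilde u\in L^2$, which is unproblematic). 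Your approach instead pushes the entire computation to the straightened cylinder, applies the a.e.\ chain rule to the Lipschitz test function $\tilde\varphi$, integrates by parts in \emph{both} $t$ and $z$, and uses the cancellation of the $\tilde u\,\partial_t\partial_z\rho$ cross terms (which is correct: both mixed partials equal $e^{\delta z\langle D_x\rangle}\partial_t\eta+(1+z)\delta\langle D_x\rangle e^{\delta z\langle D_x\rangle}\partial_t\eta$ on $\tilde\Omega_1$ and vanish on $\tilde\Omega_2$). The price you pay is that the $t$-integration by parts, with $\tilde\varphi$ only Lipschitz and $\partial_t\tilde u$ only a distribution, is not \emph{a priori} licit, which forces the time-mollification $\tilde u_\eps=\tilde u*_t\chi_\eps$ and the passage to the limit via Lemma~\ref{dtphi}. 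Indeed, note that your final paragraph essentially replaces the integration-by-parts computation of the middle paragraph by a cleaner route: apply the classical chain rule directly to $u_\eps:=\tilde u_\eps(t,x,\kappa)$ (legitimate since $\tilde u_\eps$ is $C^\infty$ in $t$ and $C^1$ in $z$ by elliptic regularity, while $\kappa$ is $W^{1,\infty}$), and pass to the limit in $\mathcal D'(\Omega)$ on each side using $u_\eps\to u$ in $L^2_{\mathrm{loc}}$ and $\partial_t\tilde u_\eps(t,x,\kappa)\to\partial_t\tilde u(t,x,\kappa)$ in $L^2_{\mathrm{loc}}$, the latter justified by the $C^0_t(H^{1,0})$ regularity furnished by Lemma~\ref{dtphi}. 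Both strategies use Lemma~\ref{dtphi} in an essential way, so neither is more self-contained than the other; the paper's version avoids any mollification in $t$ but requires spotting the exact $\partial_z$ structure in $J_\eps$, while yours is more systematic but needs the regularization step to be made rigorous --- be careful to note that $\tilde u_\eps(t,x,\kappa(t,x,y))$ is \emph{not} the time-convolution of $u$, since $\kappa$ itself depends on $t$, so the convergence $u_\eps\to u$ must be checked through the diffeomorphism rather than taken for granted.
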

These lemmas are proved in the next paragraph. 
\begin{proof}[of Proposition \ref{theoprinc}]
According to \eqref{u} and Lemma \ref{chain} we have
\begin{equation}\label{Dtphi}
 \partial_t \Phi(t,x,y) = \partial_t \tilde{u}(t,x,\kappa(t,x,y)) + \underline{w}(t,x,y)
 \end{equation}
where 
$$\underline{w}(t,x,y)  = \partial_t \kappa(t,x,y) \partial_z\tilde{u}(t,x, \kappa(t,x,y)) + \partial_t \underline{\psi}(t,x,y).$$

According to Lemma \ref{dtphi} the first term in the right hand side of \eqref{Dtphi} belongs to $H^{1,0}(\Omega(t)).$ Denoting by $\tilde{\underline{w}} $ the image of $\underline{w}$, if we show that  
\begin{equation}\label{underw}
\left\{
\begin{aligned}
&(i)\quad \tilde{\underline{w}} \in H^1(\xR^d \times \xR),\\
&(ii) \quad \supp{ \tilde{\underline{w}}} \subset \{(x,z)\in \xR^d \times (-1,0)\}\\
&(iii) \quad w\arrowvert_\Sigma = -g\eta - \mez(B^2 +\vert V \vert^2)
\end{aligned}\right.
\end{equation}
then $\partial_t \Phi$ will be the variational solution  of the problem
$$\Delta_{x,y} (\partial_t \Phi) = 0, \quad \partial_t \Phi \arrowvert_\Sigma =  -g\eta - \mez(B^2 +\vert V \vert^2).$$
By uniqueness, we deduce from  \eqref{eq:Q} that $\partial_t \Phi = -Q,$ which completes the proof of Proposition \ref{theoprinc}. Therefore we are left with the proof of \eqref{underw}.

Recall that $\tilde{\underline{\psi}}(t,x,z) = \chi(z) e^{z\langle D_x \rangle }\psi(t,x).$ Moreover by Lemma \ref{chain} we have 
$$ \widetilde{ \partial_t \underline{\psi}} =\Big( \partial_t \tilde{\underline{\psi}}  - \frac{\partial_t \rho(t,x,z)}{\partial_z\rho(t,x,z)} \partial_z\tilde{\underline{\psi}}\Big)(t,x,z). $$
Since $\psi \in   H^{s+\mez}(\xR^d), \partial_t \psi \in H^{s-\mez}(\xR^d),  \partial_t \eta \in H^{s-\mez}(\xR^d)$,  the classical properties of the Poisson kernel show that $\partial_t \tilde{\underline{\psi}} $ and $\partial_z\tilde{\underline{\psi}}$ and $\frac{\partial_t \rho }{\partial_z\rho } $ belong to $ H^s(\xR^d \times (-1,0))$ therefore to $H^1(\xR^d \times (-1,0)) $ since $s>\mez + \frac{d}{2}.$ It follows that the points $(i)$ and $(ii)$ in \eqref{underw} are satisfied by $\widetilde{ \partial_t \underline{\psi}}$. Now according to \eqref{diffeo} $\widetilde{\partial_t \kappa}$ is supported in $\xR^d \times (-1,0)$ and it follows from the elliptic regularity that $\widetilde{\partial_t \kappa}\partial_z \tilde{u}$ belongs to $H^1(\xR^d  \times (-1,0)).$ Let us check now point $(iii)$. Since $\partial_t \eta = G(\eta)\psi$ we have
\begin{equation}\label{trace}
  \partial_t \underline{\psi}(t,x,y)\arrowvert_\Sigma = \widetilde{ \partial_t \underline{\psi}} \arrowvert_{z=0} = \partial_t \psi - G(\eta)\psi  \cdot \partial_y \underline{\psi}(t,x,y) \arrowvert_\Sigma.
  \end{equation}
On the other hand we have
\begin{equation*}
\begin{aligned}
  \partial_t \kappa(t,x,y) \partial_z\tilde{u}(t,x, \kappa(t,x,y))\arrowvert_\Sigma &= \partial_t \kappa(t,x,y)\partial_z \rho(t,x, \kappa(t,x,y))   \partial_y {u}(t,x,y)\arrowvert_\Sigma\\
  &=  -\partial_t \rho(t,x, \kappa(t,x,y)) \partial_y {u}(t,x,y)\arrowvert_\Sigma\\
  &= -\partial_t \rho(t,x, \kappa(t,x,y)) \big(\partial_y\Phi(t,x,y) - \partial_y \underline{\psi}(t,x,y) \big)\arrowvert_\Sigma\\
  &= -G(\eta)\psi \cdot (B- \partial_y \underline{\psi}(t,x,y) \arrowvert_\Sigma.
  \end{aligned}
  \end{equation*}
  So using \eqref{trace} we find 
  $$ \underline{w}\, \arrowvert_\Sigma = \partial_t \psi - B G(\eta)\psi.$$
\end{proof}
 It follows from the second equation of \eqref{Zaharov} and from \eqref{defi:BV} that
 $$ \underline{w}\, \arrowvert_\Sigma =  -g\eta -\mez(B^2+ \vert V \vert^2).$$
This proves the claim $(iii)$ in \eqref{underw} and ends the proof of Proposition \ref{theoprinc}.

 \subsection{Proof  of the Lemmas}

\subsubsection{Proof of Lemma \ref{dtphi}}
 
Recall (see \eqref{diffeo} and \eqref{lambda}) that we have set 
\begin{equation*}\label{lambdaj}
\left\{
\begin{aligned}
\rho(t,x,z) &=  (1+z)e^{\delta z \langle D_x \rangle} \eta(t,x) -z \eta_*(x) \quad \text{if } (x,z) \in \tilde{\Omega}_1,\\
\rho(t,x,z) &= z+1+ \eta_*(x) \quad  \text{if } (x,z) \in \tilde{\Omega}_2,\\
\Lambda_1(t) &= \frac{1}{\partial_z \rho (t,\cdot)} \partial_z, \quad   \Lambda_2(t)  = \nabla_x - \frac{\nabla_x \rho(t, \cdot)}{\partial_z \rho(t,\cdot)}\partial_z 
\end{aligned}
\right.
 \end{equation*} 
 and that $\kappa_t$ has been defined in \eqref{kappa}. 
 
 If we set $\hat{\kappa_t}(x,y) = (x, \kappa(t, x,y))$ then $\hat{\kappa_t}$ is a bijective map from the space $H^{1,0}(\tilde{\Omega})$ (defined as in Proposition \ref{hilbert}) to the space $H^{1,0}( \Omega(t)). $ Indeed near the top boundary ($z\in (-2,0))$ this follows from the classical invariance of the usual space $H^1_0$ by a $W^{1,\infty}$-diffeomorphism,  while, near the bottom, our diffeomorphim is of class $H^\infty$ hence preserves the space $H^{1,0}.$

Now we fix  $t_0 \in I$, we take $\eps\in \xR\setminus{\{0\}}$ small enough and  we set for $t\in I$
\begin{equation}\label{FH}
\left\{
\begin{aligned}
F (t) &=  \iint_{\Omega (t)} \nabla_{x,y}u(t,x,y) \cdot \nabla_{x,y}\theta(x,y) dx dy \\
H  (t) &= - \iint_{\Omega (t)} \nabla_{x,y}\underline{\psi}(t, x,y) \cdot \nabla_{x,y}\theta(x,y) dx dy \\
 \end{aligned}
 \right.
 \end{equation}
 where $\theta  \in H^{1,0}(\Omega(t)) $ is chosen as follows.
 
 In  $F(t_{0}+\eps )$ we  take
  $$\theta_1 (x,y) = \frac{ u (t_0 + \eps,x,  y) - \tilde{u}(t_0, x,\kappa (t_0+ \eps, x,y))}{\eps} \in H^{1,0}(\Omega(t_0+ \eps)).$$
 
 In $F(t_0)$ we  take
 $$\theta_2 (x,y) = \frac{\tilde{u}(t_0 + \eps,x,  \kappa(t_0,x,y)) - u(t_0, x,y)}{\eps} \in H^{1,0}(\Omega(t_0)).$$
 Then in the variables $(x,z)$ we have
 \begin{equation}\label{thetaj}
 \begin{aligned}
& \tilde{\theta}_1(x,z) = \theta_1(x,\rho(t_0 +\eps,x,z)) = \frac{ \tilde{u} (t_0 + \eps,x, z) - \tilde{u}(t_0, x,z)}{\eps}\\
   & \tilde{\theta}_2(x,z) =  \theta_2(x,\rho(t_0,x,z)) =  \frac{ \tilde{u} (t_0 + \eps,x,  z) - \tilde{u}(t_0, x,z)}{\eps},
 \end{aligned}
\end{equation} 
 so we see that $\tilde{\theta}_1(x,z) =\tilde{\theta}_2(x,z)=:\tilde{\theta} (x,z)$.
 
It follows from \eqref{egvar} that for all $t\in I$ we have $F (t)   = H (t).$ Therefore
   $$
J _\eps(t_0) =: \frac{F (t_0 + \eps) - F (t_0)}{\eps}=\mathcal{J} _\eps(t_0) =: \frac{H (t_0 + \eps) - H (t_0)}{\eps}.
$$


Then after  changing  variables as in \eqref{diffeo} we obtain
\begin{equation}\label{J=sumK}
\begin{aligned}
J _\eps(t_0)  = \frac{1}{\eps}\sum_{j=1}^2 \iint_{\tilde{\Omega} }
\big[&\Lambda_j(t_0+\eps)\tilde{u}(t_0+\eps, x,z)\Lambda_j(t_0+\eps)
\tilde{\theta}(x,z)\partial_z \rho(t_0+ \eps,x,z) \\
-&\Lambda_j(t_0)\tilde{u}(t_0 , x,z)\Lambda_j(t_0)
\tilde{\theta}(x,z) \partial_z\rho(t_0,x,z)\big]dx dz 
=:\sum_{j=1}^2 K_{j,\eps}(t_0).
\end{aligned}
\end{equation}
 
With the notation used in \eqref{lambdaj} we can write,
\begin{equation}\label{champ}
\Lambda_j(t_0+ \eps) - \Lambda_j(t_0) = \beta_{j,\eps}(t_0,x,z) \partial_z, \quad j=1,2.
\end{equation}
Notice  that since the function $\rho$ does not depend on $t$ for $z \leq -1$ we have $ \beta_{j,\eps} =0$ in this set.

Then we have the following Lemma.
\begin{lemm}\label{estbeta}
There exists a non decreasing function $\mathcal{F}:\xR^+ \to \xR^+$ such that
$$
\sup_{t_0 \in I}\iint_{\tilde{\Omega} }\vert \beta_{j,\eps}(t_0,x,z)\vert^2 dxdz 
\leq \eps^2 \mathcal{F}\big(\Vert (\eta, \psi)\Vert_{L^\infty(I,H^{s+\mez}(\xR^d) \times H^{s+\mez}(\xR^d))}\big).
$$
\end{lemm}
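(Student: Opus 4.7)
The strategy is to reduce the bound to a finite-difference estimate on the coefficients of the $\Lambda_j$, and then to use the explicit form \eqref{diffeo} of $\rho$ together with the first equation $\partial_t\eta=G(\eta)\psi$ of \eqref{Zaharov} to produce the announced $\eps^2$ decay.

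First, I would rewrite the differences appearing in \eqref{champ} as explicit quotients. From the definition \eqref{lambda} of $\Lambda_1$ one has
\[
\beta_{1,\eps}(t_0,x,z)=\frac{1}{\partial_z\rho(t_0+\eps,x,z)}-\frac{1}{\partial_z\rho(t_0,x,z)}=\frac{\partial_z\rho(t_0,x,z)-\partial_z\rho(t_0+\eps,x,z)}{\partial_z\rho(t_0+\eps,x,z)\,\partial_z\rho(t_0,x,z)},
\]
and for $\Lambda_2$ an analogous vector-valued quotient involving $\nabla_x\rho/\partial_z\rho$. Splitting
\[
\frac{\nabla_x\rho(t_0+\eps)}{\partial_z\rho(t_0+\eps)}-\frac{\nabla_x\rho(t_0)}{\partial_z\rho(t_0)}=\frac{\nabla_x\rho(t_0+\eps)-\nabla_x\rho(t_0)}{\partial_z\rho(t_0+\eps)}+\nabla_x\rho(t_0)\,\beta_{1,\eps}(t_0,\cdot),
\]
and using the uniform positivity of $\partial_z\rho$ from \eqref{rhokappa}$(i)$ together with the $L^\infty$ bound on $\nabla_x\rho$ from \eqref{rhokappa}$(ii)$, the task reduces to an $L^2(\tilde\Omega)$ estimate of the two differences $\partial_z\rho(t_0+\eps,\cdot)-\partial_z\rho(t_0,\cdot)$ and $\nabla_x\rho(t_0+\eps,\cdot)-\nabla_x\rho(t_0,\cdot)$. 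Since $\rho$ is $t$-independent on $\tilde{\Omega}_2$ by \eqref{diffeo}, both differences vanish there, and the integration is effectively over $\tilde{\Omega}_1=\xR^d\times(-1,0)$.

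Next, a Taylor expansion in $t$ pulls out one factor of $\eps$:
\[
\partial_z\rho(t_0+\eps,x,z)-\partial_z\rho(t_0,x,z)=\eps\int_0^1\partial_t\partial_z\rho(t_0+s\eps,x,z)\,ds,
\]
and similarly for $\nabla_x\rho$. By Minkowski's inequality it then suffices to prove the $t$-uniform bound
\[
\|\partial_t\partial_z\rho(t,\cdot)\|_{L^2(\tilde\Omega_1)}+\|\partial_t\nabla_x\rho(t,\cdot)\|_{L^2(\tilde\Omega_1)}\leq \mathcal{F}\bigl(\|(\eta,\psi)\|_{L^\infty(I,H^{s+\mez}\times H^{s+\mez})}\bigr).
\]
The formula \eqref{diffeo} gives $\partial_t\rho(t,x,z)=(1+z)e^{\delta z\langle D_x\rangle}\partial_t\eta(t,x)$ on $\tilde{\Omega}_1$ (the term $-z\eta_*$ being $t$-independent), so applying one more $\partial_z$ or $\nabla_x$ produces at worst an additional factor $\langle D_x\rangle$ acting on $e^{\delta z\langle D_x\rangle}\partial_t\eta$.

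The final ingredient is the standard half-derivative smoothing of the Poisson extension: for any $f\in H^{\mez}(\xR^d)$,
\[
\int_{-1}^{0}\|\langle D_x\rangle e^{\delta z\langle D_x\rangle}f\|_{L^2(\xR^d)}^{2}\,dz=\frac{1}{2\delta}\int\langle\xi\rangle\bigl(1-e^{-2\delta\langle\xi\rangle}\bigr)|\widehat{f}(\xi)|^{2}\,d\xi\leq\frac{1}{2\delta}\|f\|_{H^{\mez}(\xR^d)}^{2},
\]
applied with $f=\partial_t\eta=G(\eta)\psi$. The right-hand side is controlled by $\mathcal{F}(\|(\eta,\psi)\|_{H^{s+\mez}\times H^{s+\mez}})$ via the Dirichlet--Neumann estimates of \cite{ABZ3}, since $G(\eta)\psi\in H^{s-\mez}(\xR^d)\hookrightarrow H^{\mez}(\xR^d)$ (indeed $s>\mez+d/2\ge 1$ forces $s-\mez\ge\mez$). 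The only delicate point, rather than a genuine obstacle, is this tight regularity accounting: at the critical index $s>\mez+d/2$, $\partial_t\eta$ only barely belongs to $H^{\mez}$, so the half-derivative gain offered by $e^{\delta z\langle D_x\rangle}$ on the finite strip $z\in(-1,0)$ is essential and must not be wasted.
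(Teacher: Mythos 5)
Your proof is correct and follows essentially the same route as the paper: both observe that $\beta_{j,\eps}$ vanishes for $z\leq -1$, pull out the factor of $\eps$ via a Taylor/fundamental-theorem expansion in $t$, and control $\partial_t\nabla_{x,z}\rho$ in $L^2(\tilde\Omega_1)$ through the half-derivative gain of the Poisson extension applied to $\partial_t\eta = G(\eta)\psi\in H^{s-\mez}\hookrightarrow H^{\mez}$, together with $\partial_z\rho\geq h/3$ and the $L^\infty$ bound on $\nabla_x\rho$. The only cosmetic difference is that you split the finite difference of $\nabla_x\rho/\partial_z\rho$ algebraically before Taylor-expanding, whereas the paper Taylor-expands the quotient first and then computes $\partial_t(\nabla_x\rho/\partial_z\rho)$ by the quotient rule; both reduce to the same three estimates.
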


\begin{proof}
 In the set $\{(x,z): x\in \xR^d, z \in (-1,0)\}$ the most delicate term to deal with  is
$$
(1) =:   \frac{\nabla_x \rho }{ \partial_z \rho} (t_0+ \eps,x,z) -   \frac{\nabla_x \rho }{ \partial_z \rho}(t_0,x,z)= \eps \int_0^1 \partial_t\Big( \frac{\nabla_x \rho}{\partial_z \rho} \Big)(t_0 + \eps \lambda,x,z)
d\lambda.
$$
We have
$$
\partial_t\Big( \frac{\nabla_x \rho}{\partial_z \rho} \Big) 
=\frac{\nabla_x \partial_t \rho}{\partial_z \rho}  
-\frac{(\partial_z \partial_t \rho)\nabla_x \rho}{(\partial_z \rho)^2}.
$$
First of all we have  $\partial_z \rho \geq \frac{h}{3}.$ 
Now since $s-\mez >\frac{d}{2} \geq \mez,$ 
we can write
\begin{equation}\label{nablarho}
\begin{aligned}
\Vert\nabla_x \partial_t \rho(t,\cdot) \Vert_{ L^2(\tilde{\Omega}_1) } 
&\leq 2\Vert e^{\delta z \vert D_x \vert}G(\eta)\psi(t,\cdot) \Vert_{L^2((-1,0),H^1(\xR^d))} \\
& \leq C\Vert G(\eta)\psi(t,\cdot)\Vert_{H^\mez (\xR^d)}\leq C\Vert G(\eta)\psi(t,\cdot)\Vert_{H^{s- \mez} (\xR^d)}\\
&\leq  C\big(\Vert (\eta, \psi)\Vert_{L^\infty(I,H^{s+\mez}(\xR^d) \times H^{s+\mez}(\xR^d))}\big).
\end{aligned}
\end{equation}
On the other hand we have
\begin{equation*}
\begin{aligned}
\Vert \nabla_x \rho(t,\cdot) \Vert_{L^\infty(\tilde{\Omega}_1)}
&\leq C\Vert e^{\delta z \vert D_x \vert}\nabla_x \eta(x,\cdot) \Vert_{L^\infty((-1,0), H^{s-\mez}(\xR^d)}
+ \Vert \nabla_x \eta_*\Vert_{L^\infty(\xR^d)}\\ 
&\leq C' \Vert \eta(t,\cdot) \Vert_{H^{s+\mez}(\xR^d)}
+\Vert \nabla_x \eta_*\Vert_{L^\infty(\xR^d)} \leq C"\Vert \eta(t,\cdot) \Vert_{H^{s+\mez}(\xR^d)}
\end{aligned}
\end{equation*}
 by \eqref{eta}. Eventually since 
$$
\partial_z \partial_t \rho 
= e^{\delta z \vert D_x \vert}G(\eta) \psi 
+ (1+z)\delta e^{\delta z \vert D_x \vert}\vert D_x \vert G(\eta) \psi,
$$
we have as in \eqref{nablarho}
\begin{equation}\label{estrho2}
\Vert\partial_z \partial_t \rho(t,\cdot) \Vert_{ L^2(\tilde{\Omega})}   \leq  C\big(\Vert (\eta, \psi)\Vert_{L^\infty(I,H^{s+\mez}(\xR^d) \times H^{s+\mez}(\xR^d))}\big).
\end{equation}

Then the Lemma follows.
\end{proof}

 Thus we can write for $j=1,2,$
\begin{equation}\label{J1=}
\begin{aligned}
K_{j,\eps}(t_0) &=  \sum_{k=1}^4\iint_{\tilde{\Omega}_1}A^k_{j,\eps}(t_0,x,z)dx dz, \quad  \\
 A^1_{j,\eps}(t_0,\cdot) 
 &= \Lambda_j(t_0)\Big[\frac {\tilde{u}(t_0+\eps,\cdot)-\tilde{u}(t_0,\cdot)}{\eps}\Big]
 \Lambda_j(t_0)\tilde{\theta}(\cdot)\partial_z\rho(t_0,\cdot),\\
A^2_{j,\eps}(t_0,\cdot)
&=\Big[\frac{\Lambda_j(t_0+\eps) -\Lambda_j(t_0)}{\eps}\Big]
\tilde{u}(t_0,\cdot)\Lambda_j(t_0)\tilde{\theta}(\cdot)\partial_z \rho(t_0,\cdot),\\
A^3_{j,\eps}(t_0,\cdot)
&=\Lambda_j(t_0+\eps)\tilde{u}(t_0+\eps,\cdot)
\Big[\frac{\Lambda_j(t_0+\eps) -\Lambda_j(t_0)}{\eps}\Big]\tilde{\theta}(\cdot)\partial_z\rho(t_0,\cdot),\\
A^4_{j,\eps}(t_0,\cdot)
&=\Lambda_j(t_0+\eps)\tilde{u}(t_0+\eps,\cdot)
\Lambda_j(t_0+\eps)\tilde{\theta}(\cdot)
\Big[\frac{\partial_z\rho(t_0+\eps, \cdot) - \partial_z\rho(t_0,\cdot)}{\eps}\Big].\\
\end{aligned}
\end{equation}
In what follows to simplify the notations we shall set $X=(x,z)\in \tilde{\Omega} $ and we recall that $ \Lambda_j(t_0+\eps) -\Lambda_j(t_0)=0$ when $z\leq -1.$

First of all, using the lower bound $\partial_z\rho(t_0,X) \geq \frac{h}{3}$, 
we obtain
\begin{equation}\label{A1}
\iint_{\tilde{\Omega} }A^1_{j,\eps}(t_0,X)dX \geq \frac{h}{3} \lA \Lambda_j(t_0)\Big[\frac{\tilde{u}(t_0 + \eps,\cdot) - \tilde{u}(t_0,\cdot)}{\eps} \Big] \rA^2_{L^2(\tilde{\Omega} )}.
\end{equation}

Now it follows from \eqref{champ} that
$$\la \iint_{\tilde{\Omega}}A^2_{j,\eps}(t_0,X)dX \ra 
\leq \sup_{t\in I}\Vert \frac{\beta_{j,\eps}}{\eps}\Vert_{L^2(\tilde{\Omega})}
\sup_{t\in I}\Vert \partial_z \tilde{u}(t,\cdot)\Vert_{L^\infty_z(-1,0, L^\infty(\xR^d))}\Vert
 \Lambda_j(t_0)\tilde{\theta}\Vert_{L^2(\tilde{\Omega} )}.
 $$
Since $s-\mez>\frac{d}{2}$  the elliptic regularity theorem shows that 
\begin{equation}\label{regtheo}
\begin{aligned}
\sup_{t\in I}\Vert \partial_z \tilde{u}(t,\cdot)\Vert_{L^\infty_z(-1,0, L^\infty(\xR^d))}&\leq \sup_{t\in I}\Vert \partial_z \tilde{u}(t,\cdot)\Vert_{L^\infty_z(-1,0, H^{s-\mez}(\xR^d))}\\
&\leq C\big(\Vert(\eta, \psi)\Vert_{L^\infty(I,H^{s+\mez}(\xR^d)\times H^{s+\mez}(\xR^d))}\big)
\end{aligned}
\end{equation}
Using Lemma \ref{estbeta} we deduce that
\begin{equation}\label{A2}
\la \iint_{\tilde{\Omega}}A^2_{j,\eps}(t_0,X)dX \ra 
\leq C\big(\Vert(\eta, \psi)\Vert_{L^\infty(I,H^{s+\mez}(\xR^d)
\times H^{s+\mez}(\xR^d))}\big)\Vert\Lambda_j(t_0)\tilde{\theta}\Vert_{L^2(\tilde{\Omega})}.
\end{equation}

Now write
\begin{multline*}
\iint_{\tilde{\Omega} }A^3_{j,\eps}(t_0,X)dX =\\
= \iint_{\tilde{\Omega} }\Lambda_j(t_0+ \eps)\tilde{u}(t_0+\eps,X)
\beta_\eps(t_0+\eps,X)\partial_z\tilde{\theta}(t_0,X)\partial_z\rho(t_0,X)dX.
\end{multline*}
By elliptic regularity, $\Lambda_j(t)\tilde{u}$ is bounded in $L^\infty_{t,x,z}$ by a fonction depending only on 
$$
\Vert(\eta, \psi)\Vert_{L^\infty(I,H^{s+\mez}(\xR^d)\times H^{s+\mez}(\xR^d))}.
$$  
Therefore we can write
\begin{equation}\label{A3}
\la \iint_{\tilde{\Omega}}A^3_{j,\eps}(t_0,X)dX \ra 
\leq C\big(\Vert(\eta, \psi)\Vert_{L^\infty(I,H^{s+\mez}(\xR^d)
\times H^{s+\mez}(\xR^d))}\big)\Vert\partial_z\tilde{\theta}\Vert_{L^2(\tilde{\Omega})}.
\end{equation}
Since
$$
\frac{\partial_z\rho(t_0+\eps, x,z) - \partial_z\rho(t_0,x,z)}{\eps} 
=\int_0^1  \partial_t \partial_z \rho(t_0+ \lambda\eps, x,z) d\lambda 
$$
(which vanishes when $z\leq  -1$), we find  using \eqref{regtheo} and \eqref{estrho2}
\begin{equation}\label{A4}
\la \iint_{\tilde{\Omega}}A^4_{j,\eps}(t_0,X)dX \ra \leq C\big(\Vert(\eta, \psi)\Vert_{L^\infty(I,H^{s+\mez}(\xR^d)\times H^{s+\mez}(\xR^d))}\big)\Vert\partial_z\tilde{\theta}\Vert_{L^2(\tilde{\Omega})}.
\end{equation}
 
  Now we consider
\begin{equation}\label{estdH}
\mathcal{J}_\eps = \frac{H (t_0+\eps)-H (t_0)}{\eps}. 
\end{equation}
We make the change of variable $(x,z)\to (x,\rho(t_0,x,z))$ 
in the integral and we decompose the new integral  as in \eqref{J=sumK}, 
\eqref{J1=}. This gives, with $X=(x,z)$,  
$$
\mathcal{J}_\eps   = \sum_{j=1}^2\mathcal{K}_{j,\eps}(t_0), 
\quad \mathcal{K}_{j,\eps}(t_0)
=    \sum_{k=1}^4\iint_{\tilde{\Omega}}\mathcal{A}^k_{j,\eps}(t_0,X)dX,
$$
where $\mathcal{A}^k_{j,\eps}$ has the same form as $-A^k_{j,\eps}$ in \eqref{J1=} 
except the fact that $\tilde{u}$ is replaced by $\underline{\tilde{\psi}}.$ 
Recall that $\underline{\tilde{\psi}}(t,x,z)  = \chi(z)e^{z\vert D_x\vert} \psi(t,x).$ Now we have
\begin{equation*}
\begin{aligned}
\Vert \Lambda_j \partial_t \underline{\tilde{\psi}}\Vert_{L^\infty(I, L^2(\tilde{\Omega}))} 
&\leq \mathcal{F}\big(\Vert  \eta\Vert_{L^\infty(I, H^{s+\mez}(\xR^d))}\big)\Vert 
\partial_t \underline{\tilde{\psi}}\Vert_{L^\infty(I, L_z^2((-1,0),H^1(\xR^d)))} \\
&\leq \mathcal{F}\big(\Vert  \eta\Vert_{L^\infty(I, H^{s+\mez}(\xR^d))}\big)
\Vert \partial_t\psi\Vert_{L^\infty(I, H^\mez (\xR^d))}\\
&\leq \mathcal{F}\big(\Vert  \eta\Vert_{L^\infty(I, H^{s+\mez}(\xR^d))}\big)
\Vert \partial_t\psi\Vert_{L^\infty(I, H^{s-\mez} (\xR^d))} 
\end{aligned}
\end{equation*}
since $s-\mez \geq \mez.$ 
Using the equation \eqref{Zaharov} on $\psi$, 
and the fact that $H^{s-\mez}(\xR^d)$ is an algebra we obtain
$$ \Vert \Lambda_j \partial_t \underline{\tilde{\psi}}\Vert_{L^\infty(I, L^2(\tilde{\Omega}))} \leq \mathcal{F}\big(\Vert  (\eta,\psi)\Vert_{L^\infty(I, H^{s+\mez}(\xR^d))\times  H^{s+\mez}(\xR^d))}  ).$$
It follows that we have
\begin{equation}\label{estAA1}
\la \iint_{\tilde{\Omega}}\mathcal{A}^1_{j,\eps}(t_0,X)dX\ra \leq  \mathcal{F}\big(\Vert(\eta, \psi)\Vert_{L^\infty(I,H^{s+\mez}(\xR^d)\times H^{s+\mez}(\xR^d))}\big) \Vert\Lambda_j(t_0)\tilde{\theta}\Vert_{L^2(\tilde{\Omega})}.
\end{equation}

Now since
\begin{equation*}
\begin{aligned}
\Vert \Lambda_j(t_0)\underline{\tilde{\psi}} (t,\cdot)\Vert_{L^2_z((-1,0),L^\infty(\xR^d))}
&\leq \mathcal{F}(\Vert \eta\Vert _{L^\infty(I, H^{s+\mez}(\xR^d))})\Vert
\underline{\tilde{\psi}}(t_0,\cdot)\Vert_{L^2_z((-1,0),H^{\frac{d}{2}+\eps}(\xR^d))}\\
&\leq \mathcal{F}(\Vert \eta\Vert _{L^\infty(I, H^{s+\mez}(\xR^d))})\Vert\psi\Vert_{L^\infty(I,H^{s+\mez}(\xR^d))}
\end{aligned}
\end{equation*}
we can use the same estimates as in \eqref{A2}, \eqref{A3}, \eqref{A4} to bound the terms $\mathcal{A}_{j,\eps}^k$ for $k = 2,3,4$. We obtain finally
  \begin{equation}\label{H}
\la \frac{H (t_0 + \eps) - H (t_0)}{\eps}\ra \leq C\big(\Vert(\eta, \psi)\Vert_{L^\infty(I,H^{s+\mez}(\xR^d)\times H^{s+\mez}(\xR^d))}\big)\sum_{j=1}^2 \Vert\Lambda_j(t_0)\tilde{\theta}\Vert_{L^2(\tilde{\Omega})}.
 \end{equation}
 Summing up using   \eqref{J1=}, \eqref{A1}, \eqref{A2},\eqref{A3},\eqref{A4}, \eqref{H} we find that, setting 
 $$ \tilde{U}_\eps(t_0,\cdot) = \frac{\tilde{u}(t_0 + \eps,\cdot) -  \tilde{u}(t_0,\cdot)}{\eps},$$
   there exists a non decreasing function $\mathcal{F}:\xR^+ \to \xR^+$ such that  for all $\eps>0 $  
 $$\sum_{j=1}^2 \sup_{t_0 \in I} \Vert \Lambda_j(t_0) \tilde{U}_\eps(t_0,\cdot)\Vert_{L^2(\tilde{\Omega})} \leq \mathcal{F} \big(  \Vert(\eta, \psi)\Vert_{L^\infty(I,H^{s+\mez}(\xR^d)\times H^{s+\mez}(\xR^d))}\big).$$
   Since $\tilde{u}(t,\cdot) \in H^{1,0}(\tilde{\Omega}), $    the Poincar\' e inequality ensures that
 \begin{equation}\label{Uborne}
 \lA\tilde{U}_\eps(t_0,\cdot)\rA_{L^2(\tilde{\Omega} )} \leq   C\big(\Vert(\eta, \psi)\Vert_{L^\infty(I,H^{s+\mez}(\xR^d)\times H^{s+\mez}(\xR^d))}\big). 
  \end{equation}
  
It follows that we can extract  a subsequence  $(\tilde{U}_{\eps_k})$ which converges in the weak-star topology of $(L^\infty \cap C^0)(I, H^{1,0}(\tilde{\Omega}))$.
  But this sequences converge  in $\mathcal{D}'(I\times \tilde{\Omega})$ 
  to $ \partial_t \tilde{u}. $ Therefore $\partial_t \tilde{u} \in C^0(I, H^{1,0}(\tilde{\Omega}))$ and this implies that $\partial_t \tilde{u}(t_0, \cdot, \kappa(t_0, \cdot,\cdot))$ belongs to  $H^{1,0}(\Omega(t_0) $ which completes the proof of Lemma \ref{dtphi}.  

\subsubsection{Proof of Lemma \ref{chain}}

Let $ \varphi \in C_0^\infty(\Omega)$ and set
\begin{equation}\label{I+J}
\begin{aligned}
&v_\eps(t,x,y)  = \frac{1}{\eps}[\tilde{u}(t+\eps,x,\kappa(t+\eps,x,y)) - \tilde{u}(t +\eps ,x,\kappa(t,x,y))], \\
&w_\eps(t,x,y)  = \frac{1}{\eps}[\tilde{u}(t+\eps,x,\kappa(t,x,y)) - \tilde{u}(t  ,x,\kappa(t,x,y))],\\
&J_\eps  = \iint_{\Omega}v_\eps(t,x,y) \varphi(t,x,y) dt dx dy, \quad K_\eps  = \iint_{\Omega}w_\eps(t,x,y) \varphi(t,x,y) dt dx dy,\\
&I_\eps = J_\eps + K_\eps.
\end{aligned}
\end{equation}
Let us consider first $K_\eps.$ In the integral in $y$ we make the change of variable   $\kappa(t,x,y) = z \Leftrightarrow y = \rho(t,x,z).$ Then setting  $\tilde{\varphi}(t,x,z) = \varphi(t,x,\rho(t,x,z))$ and $X = (x,z) \in \tilde{\Omega}$ we obtain
$$K_\eps = \iint_I \int_{\tilde{\Omega}}\frac{ \tilde{u}(t+\eps,X) - \tilde{u}(t ,X)}{\eps} \tilde{\varphi}(t,X) \partial_z \rho(t,X)dt dX.$$
Since $\rho \in C^1(I \times \tilde{\Omega})$  we have $\tilde{\varphi} \cdot \partial_z \rho \in C_0^0(I\times \tilde{\Omega}).$  Now we know that the sequence $\tilde{U}_\eps =  \frac{ \tilde{u}(\cdot+\eps,\cdot) - \tilde{u}(\cdot,\cdot)}{\eps} $ converges in $\mathcal{D}'(I\times\tilde{\Omega})$ to $ \partial_t\tilde{u}.$ We use  this fact, we  approximate  $\tilde{\varphi} \cdot \partial_z \rho$ by a sequence in $C_0^\infty(I\times\tilde{\Omega})$ and we use  \eqref{Uborne}     to deduce that

$$
\lim_{\eps \to 0} K_\eps = \int_I\iint_{\tilde{\Omega}} \partial_t \tilde{u}(t,X)\tilde{\varphi}(t,X) \partial_z \rho(t,X)dt dX.$$
Coming back to the $(t,x,y)$ variables we obtain   
 \begin{equation}\label{K}
 \lim_{\eps \to 0} K_\eps =  \iint_{ \Omega} \partial_t \tilde{u}(t,x,\kappa(t,x,y)) \varphi (t,x,y)  dt dx dy. 
  \end{equation}
Let us look now to $J_\eps.$ We cut it into two integrals; in the first we set $\kappa(t+\eps,x,y) =z$ in the second we set  $\kappa(t ,x,y) =z.$ With $X=(x,z)\in \tilde{\Omega}$ we obtain 
$$J_\eps = \frac{1}{\eps}\int_I \iint_{\tilde{\Omega}_1 }\tilde{u}(t + \eps,X) \Big( \int_0^1 \frac{d}{d\sigma}\big\{\varphi(t,x,\rho(t+\eps \sigma,X))\partial_z\rho(t+\eps \sigma,X)\big\}d\sigma\Big) dtdX.$$
Differentiating with respect to $\sigma$ we see easily that
$$J_\eps =   \int_I \iint_{\tilde{\Omega}}\tilde{u}(t+\eps,X) \frac{\partial}{\partial z}\Big( \int_0^1 \partial_t\rho(t+\eps \sigma,X) \varphi(t,x,\rho(t+\eps \sigma,X))d\sigma\Big) dt dX.$$
Since $\tilde{u}$ is continuous in $t$ with values in $L^2(\tilde{\Omega}),  \partial_t\rho $ is continous in $(t,x,z)$ and $\varphi \in C_0^\infty$ we can pass to the limit and we obtain
$$\lim_{\eps \to 0}J_\eps =   \int_I \iint_{\tilde{\Omega}_1}\tilde{u}(t ,X) \frac{\partial}{\partial z}\Big( \partial_t \rho(t,X) \varphi(t,x,\rho(t ,X)) \Big) dt dX.$$
Now we can integrate by parts. Since, thanks to $\varphi,$ we have compact support in $z$ we obtain
$$\lim_{\eps \to 0}J_\eps =  - \int_I \iint_{\tilde{\Omega}}\partial_z \tilde{u}(t ,X)\partial_t \rho(t,X) \varphi(t,x,\rho(t ,X))  dt dX.$$
Now since 
$$\partial_t \rho(t,X) = -\partial_t \kappa(t,x,y)\partial_z \rho(t,x,z)$$
 setting in the integral in $z$, $\rho(t,X) = y$ we obtain   
 \begin{equation}\label{Jeps}
 \lim_{\eps \to 0}J_\eps  = \iint_{\Omega}\partial_z \tilde{u}(t ,x,\kappa(t,x,y)) \partial_t\kappa(t,x,y) \varphi(t,x,y)  dt dx dy.
 \end{equation}
Then Lemma \ref{chain} follows from \eqref{I+J}, \eqref{K} and \eqref{Jeps}.

\addcontentsline{toc}{section}{Bibliography}

\end{document}